\DeclareMathAlphabet{\pazocal}{OMS}{zplm}{m}{n}
\def\N{\mathbb N}
\def\span{\mathop{\hbox{\rm span}}}
\def\fl{{E^1}}
\def\h{\mathop{\hbox{\rm h}_{\hbox{\tiny\rm alg}}}}
\def\remove#1{}
\def\F{\mathcal{F}}
\def\G{\mathcal{G}}
\def\gkdim{\mathop{\hbox{\rm GKdim}}}
\xdef\Rad{2}
\newtheorem{lemma}{Lemma}[section]
\newtheorem{corollary}[lemma]{Corollary}
\newtheorem{theorem}[lemma]{Theorem}
\newtheorem{proposition}[lemma]{Proposition}
\newtheorem{remark}[lemma]{Remark}
\newtheorem*{theorem*}{Theorem}
\theoremstyle{definition}
\newtheorem{definition}[lemma]{Definition}
\newtheorem{example}[lemma]{Example}
\title[Entropy of Path Algebras]{Algebraic Entropy and a Complete Classification of Path Algebras over Finite Graphs by Growth}
\author[Bock]{Wolfgang Bock}
\address{W. Bock: Technomathematics group, Mathematics Department, Technische Universit\"at Kaiserslautern, Kaiserslautern, Germany. }
\email{bock@nathematik.uni-kl.de}
\author[Gil]{Crist\'obal Gil Canto}
\address{C. Gil Canto: Departamento de Matem\'atica Aplicada, E.T.S. Ingenier\'\i a Inform\'atica, Universidad de M\'alaga, 
M\'alaga,   Spain.}
\email{cgilc@uma.es}
\author[Mart\'in]{Dolores Mart\'in Barquero} 
\address{D. Mart\'{\i}n Barquero: Departamento de Matem\'atica Aplicada, Escuela de Ingenier\'{\i}as Industriales, Universidad de M\'alaga, 
M\'alaga, Spain.}
\email{dmartin@uma.es}
\author[Mart\'in]{C\'andido Mart\'in Gonz\'alez} 
\address{C. Mart\'{\i}n Gonz\'alez:  Departamento de \'Algebra Geometr\'{\i}a y Topolog\'{\i}a, Fa\-cultad de Ciencias, Universidad de M\'alaga, 
M\'alaga, Spain.}
\email{candido\_m@uma.es}
\author[Ruiz]{Iv\'an Ruiz Campos}
\address{I. Ruiz Campos:  Departamento de \'Algebra Geometr\'{\i}a y Topolog\'{\i}a, Fa\-cultad de Ciencias, Universidad de M\'alaga, 
M\'alaga, Spain.}
\email{ivaruicam@uma.es}
\author[Sebandal]{Alfilgen Sebandal}
\address{A. Sebandal: Department of Mathematics and Statistics, Mindanao State University - Iligan Institute of Technology, Iligan City, Philippines and Western Sydney University, Sydney, Australia.}
\email{alfilgen.sebandal@g.msuiit.edu.ph}
\subjclass[2020] {16S88, 16P90} 
\keywords{Graph algebra, path algebra, Leavitt path algebra, Gelfand-Kirillov dimension \and algebraic entropy.}
\begin{document}
\maketitle
 
\begin{abstract}
\textcolor{black}{The Gelfand-Kirillov dimension is a well established quantity to classify the growth of infinite dimensional algebras. In this article we introduce the algebraic entropy for path algebras. For the path algebras, Leavitt path algebras and the path algebra of the extended (double) graph, we compare the Gelfand-Kirillov dimension and the entropy. We give a complete classification of path algebras over finite graphs by dimension, Gelfand-Kirillov dimension and algebraic entropy. We show indeed how these three quantities are dependent on cycles inside the graph. Moreover we show that the algebraic entropy is conserved under Morita equivalence. In addition we give several examples of the entropy in path algebras and Leavitt path algebras. }
\end{abstract}

\section{Introduction}
In the study of infinite dimensional algebras, the notion of dimension of course has not much sense. However, instead of the dimension, one can categorize an algebra by the growth of the dimension of finitely generated subalgebras, if the algebra is filtered or graded. For commutative algebras, this results in the well-known Krull dimension, see e.g.~\cite{Eisenbud} or in the Gelfand-Kirillov dimension (GK-dimension) in the general case, see e.g.~\cite{GKdimension,SmithZhang98}. In particular, an algebra is of polynomial growth if the GK-dimension is finite and of exponential growth if it is infinite. It is somewhat unsatisfactory that a quantification of the exponential growth is not given via this concept. \textcolor{black}{For this purpose, in the case of groups and filtered algebras, the notion of algebraic entropy is introduced, see e.g.~ \cite{dikranjan2012connection,newman2000entropy}. The concept itself was sketched for the first time by Adler, Konheim and McAndrew for endomorphisms of abelian groups \cite{Adler}, or more particular endomorphisms of torsion abelian groups. This algebraic entropy, was later studied by Weiss \cite{Weiss} using  Pontryagin duality.
A different definition was given by Peters \cite{peters1979} for automorphisms of abelian groups. It coincides with the one by Weiss on endomorphisms of torsion abelian groups. The algebraic entropy was recently used and generalized in group theory in e.g.~\cite{bruno2017some,dikranjan2016entropy}. 
For graded algebras the entropy was studied in \cite{newman2000entropy}. The growth of the algebra is in this setting based on the dimensions of certain factor spaces. We will use this particular concept for filtered algebras in this article. We refer to the well-written overview article \cite{smoktunowicz2014} for more relations on growth and entropy of algebras.}
In statistical mechanics the entropy is counting the number of possible states a system can be in. The algebraic entropy can be considered similarly. The microstates that Boltzmann \cite{Boltzmann} used in his definition of the entropy are replaced by the span of finite dimensional subspaces generating the infinite dimensional algebra. The growth is then defined as the dimension of consecutive factor spaces.

In the present article we extend this concept to general path algebras and Leavitt path algebras. The Leavitt path algebra of a directed graph in the past decades received substantial interests from algebraists after its introduction in two separate, but simultaneously created, papers by Abrams and Aranda \cite{AbramsPino} and by Ara, Moreno and Pardo \cite{Aramorenopardo}. Studies look for description of the algebraic structure of these algebras as well as classifications via the geometry of the associated graphs. In \cite{Zelmanov12}, the Gelfand-Kirillov dimension (GK-dimension) of the Leavitt path algebras were investigated. In particular, it was shown that this dimension is finite only in the case where the cycles in the graph are pairwise disjoint and  a complete geometrical formula for was obtained. In \cite{Hazratsebandalvilela}, another formula of the GK-dimension was presented based on the monoid of the graded finitely generated projective modules of the Leavitt path algebra.

The generalization of the concept of entropy in this setting is the next obvious step. Indeed, we give in this article the definition of the algebraic entropy for path algebras and for Leavitt path algebras. We show that a Leavitt path algebra associated to a finite graph is either one of the three: finite dimensional, of finite GK-dimension but of entropy zero or of infinite GK-dimension but finite entropy. In addition, we give an explicit formula to obtain the entropy  for a path algebra via the adjacency matrix. In particular, we compute it for different examples with infinite GK-dimension. Furthermore, we study the behaviour of the entropy  under Morita equivalence.

\textcolor{black}{This paper is organized as follows: In Section 3, we define the algebraic entropy of an arbitrary filtered algebra in connection with the grading associated to the filtration. We also compare entropies of an algebra given different filtrations as well as the dimensions of the spaces in the filtration with the entropy and the GK-dimension. Motivated by the standard grading on the path and Leavitt path algebra, we then fix the filtrations considered in this paper. Finally, using the standard filtration, we compute the entropy of the Leavitt path algebra of a graph with a single vertex and $n$ loops as a first example. Section 3 discusses entropy of algebras under monomorphisms, epimorphisms, direct sums and Morita equivalences. In particular, bounds for the entropy of Leavitt path algebra in relation to those of the path algebra of the original graph and the extended graph were obtained. We also see that under Morita equivalence but perhaps of  different filtration, the entropy is conserved.  In Section 5, we compute the entropy of some finite graphs using the adjacency matrix, and when the graph has no cycles with a common vertex. We also classify the algebras into classes based on the dimension, GK-dimension and the entropy. Section 6, using computer algebra systems, provides computations of the entropy of some graphs based using a formula for the number of paths of certain length in the Leavitt path algebra. Lastly,  using these computations, we conjecture that the path and Leavitt path algebra of a graph have equal entropy.}

\section{Preliminaries}
In this section we give a brief definition and overview of the concepts needed from the theory of path algebras and Leavitt path algebras. Both are very intensively studied objects. Since a complete overview of all the concepts would be far too much for this article, we refer the interested reader to the articles \cite{Abramsdecade, AbramsPino, tomforde2007, abrams2008leavitt} and the monograph \cite{AAS}, as well as the references therein.

\noindent A \emph{directed graph, digraph or quiver} is a $4$-tuple $E=(E^0, E^1, s, r)$ 
consisting of two disjoint sets $E^0$, $E^1$ and two maps
 $r, s: E^1 \to E^0$. The elements of $E^0$ are called \emph{vertices} and the elements of $E^1$ are called \emph{edges} of $E$. We say that $E$ is a \emph{finite graph} if $\vert E^0 \cup E^1\vert < \infty$. For $e\in E^1$, $s(e)$ and $r(e)$ is 
 the \emph{source} and the \emph{range} of $e$, respectively. A
vertex $v$ for which $s^{-1}(v)=\emptyset$  is called a \emph{sink}, while a vertex $v$ for which $r^{-1}(v)=\emptyset$ is called a \emph{source}. 
 We will denote the set of sinks of $E$ by $\text{Sink}(E)$ and the set of sources by $\text{Source}(E)$. We say that a vertex $v \in E^0$ is a \emph{infinite emitter} if $\vert s^{-1}(v)\vert =\infty$. The \emph{set of regular
vertices} (those which are neither sinks nor infinite emitters) is denoted by $\text{Reg}(E
)$. A graph $E$ is \emph{row-finite} if  $s^{-1}(v)$ is a finite set for every $v \in E^0$. Throughout this paper we only consider finite graphs. We say that a \emph{path} $\mu$ has \emph{length} $m \in \N$, denoted by $l(\mu)=m$, if it is a finite chain of edges $\mu=e_1\ldots e_m$ such that $r(e_i)=s(e_{i+1})$ for $i=1,\ldots,m-1$.  We define $\text{Path}(E)$ as the set of all paths in $E$. We denote by $s(\mu):=s(e_1)$ the source of $\mu$ and $r(\mu):=r(e_m)$ the range of $\mu$.  We write $\mu^0$ the set of vertices of $\mu$. The vertices are the trivial paths. If $r(\mu)=s(\mu)$,
 then $\mu$ is called a \emph{closed path}. Recall that a path $\mu = e_1 \ldots e_n$, where
$e_i \in E^1$, is called a \emph{cycle} if $s(\mu) = r(\mu)$ and $s(e_i)\ne s(e_j)$ for every $i\ne j$. A cycle of length $1$ is called a \emph{loop}.  We say that $e \in E^1$ is an \emph{exit} for a cycle $\mu=e_1\ldots e_m$ if there exists an $i \in \{1,\ldots ,m\}$ such that $s(e)= s(e_i)$
and $e \ne e_i$.
Cycles $C$ and $D$ are said to be \emph{disjoint} if there is no common vertex. We will say that a cycle $C$ is  an \emph{exclusive cycle} if it is disjoint with every other cycle; equivalently, no vertex on $C$ is the base of a different cycle other than a rotated of $C$. Otherwise, we will say that $C$ is a \emph{non-exclusive cycle}. We say that $E$ satisfies \emph{Condition (EXC)} if every cycle of $E$ is an exclusive cycle.
In other words, a graph with Condition (EXC) is one without non-disjoint cycles. A $chain$ $of$ $cycles$ of $length$ $n$ is a sequence of cycles $C_1, C_2, \cdots, C_n$ such that there is a path from $C_i$ to $C_{i+1}$ for each $i<n$. This chain has an $exit$ if the last cycle $C_n$ has an exit. 

\textcolor{black}{
For a graph $E$, the  $opposite$ $graph$ $E^{\text{op}}$ of $E$ is the graph having vertices ${(E^{\text{op}})}^0:=E^0$ and edges ${(E^{\text{op}})}^1:= \{e^*:e\in E^1, s(e)=r(e^*), r(e)=s(e^*)\}$. In other words opposite graph is the obtained by taking the reverse-oriented edges of the original graph (also called the $transpose$ $graph$, see \cite{AAS}).}

\textcolor{black}{{For a ring $R(+,\cdot)$, the \emph{opposite ring} $R^{\text{op}}(+,\times)$ is the ring in which the multiplication $\times$ is defined by $a\times b = b \cdot a$, i.e.~ it is performed in reverse order. The \emph{opposite algebra} is defined in the same way.}}

 Given a directed graph, one can associate an algebra which respects the geometry of the graph, in some way. 
 
 \indent For a graph $E$ and a ring $R$ with identity, the  \textit{Path algebra} of $E$, denoted by $RE$, is the $R$-algebra generated by the sets $\{v:v\in E^0\}$ and $\{e \colon e \in E^1  \}$ with coefficients in $R$, subject to the relations:
\begin{enumerate}
\item[\textnormal{(V)}]
$v_iv_j=\delta_{i,j}v_i$ for every $v_i, v_j\in E^0$;
\item[\textnormal{(E)}] $s(e)e=e = e r(e)$ for all non-sinks $e \in E^1$.
\end{enumerate}

 The \emph{extended graph $E$} is defined as the new graph $\widehat{E}=(E^0, E^1\cup (E^1)^*, r',s'  )$, where $(E^1)^*= \{ e^*:e\in E^1 \}$ and the maps $r'$ and $s'$ are defined as
$r'|_{E^1}=r$, $s'|_{E^1}=s$, $r'(e^*)=s(e)$, and $s'(e^*)=r(e)$ for all $e\in E^1$. In other words, each $e^*\in (E^1)^*$ has orientation the reverse of that of its counterpart $e\in E^1$. The elements $(E^1)^*$ are called $ghost$ $edges$.

 \indent For a graph $E$ and a ring $R$ with identity, the  \textit{Leavitt path algebra} of $E$, denoted by $L_R(E)$, is the path algebra over the extended graph $\hat{E}$ with additional relations
\begin{enumerate}
\item[\textnormal{(CK1)}] $e^*e'=\delta_{e,e'}r(e)$ for all $e, e'\in E^1$;

\item[\textnormal{(CK2)}]$\sum_{ \{e \in E^1 \colon s(e)=v    \}  } e e^*=v$ for every $v\in \text{Reg}(E)$.

\end{enumerate}
Throughout this paper, we are working with path algebras with coefficients over a field $K$.

Two of fundamental examples of path algebras and Leavitt path algebras over a graph are the so-called $1$-petal rose $R_1$ and the oriented $n$-line graph $A_n$.

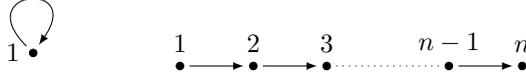
\begin{figure}[ht]\label{1petalrose-nline}
    
         \begin{tikzpicture}[scale = 0.65, shorten <=2pt,shorten >=2pt,>=latex, node distance={5mm},sub/.style = {draw, fill, circle, inner sep = 1pt}, main/.style = {draw, fill, circle, inner sep = 1pt}, sub/.style = {draw = red, fill = red, circle, inner sep = 1pt}]
    \node[main,label = left:$1$] (1) at (3,0) {};
    
    \draw[->] (1) to  [out = 135, in = 55, looseness = 40] node[auto] {} (1);
    
    \end{tikzpicture} \ \ \ \ \ \ \ \ \begin{tikzpicture}[scale = 0.65, shorten <=2pt,shorten >=2pt,>=latex, node distance={5mm},sub/.style = {draw, fill, circle, inner sep = 1pt}, main/.style = {draw, fill, circle, inner sep = 1pt}, sub/.style = {draw = red, fill = red, circle, inner sep = 1pt}]
    
    \node[main,label = above:$1$] (1) at (3,0) {};
     \node[main,label = above:$2$] (2) at (4.5,0) {};
     \node[main,label = above:$3$] (3) at (6,0) {};
      \node[main,label = above:$n-1$] (4) at (8.5,0) {};
      \node[main,label = above:$n$] (5) at (10,0) {};
    \draw[->] (1) to  (2) {} ;
    \draw[->] (2) to  (3) {} ;
    \draw[dotted] (3) to (4) {} ;
    \draw[->] (4) to  (5) {}; 
    \end{tikzpicture}
    \caption{The graphs $R_1$ and $A_n$.}
   
\end{figure}

The path algebras of these graphs are $KR_1=K[x]$, the polynomial algebra with coefficients in $K$ and $KA_n=T_n(K)$, the upper triangular matrix algebra over $K$, respectively. On the other hand, the Leavitt path algebras are $L_K(R_1)=K[x,x^{-1}]$, the Laurent polynomial algebra and $L_K(A_n)=M_n(K)$, matrix algebra over $K$, respectively. Other basic definitions and results on graphs and Leavitt path algebras can be seen in the book \cite{AAS}.

We will apply algebraic entropy ideas in this work both to path and to Leavitt path algebras. Since the definition of algebraic entropy is related to the one of Gelfand-Kirillov dimension, we start by recalling the latter.

\begin{definition}
\cite{GKdimension}  Given functions $f,g:\mathbb{N} \rightarrow \mathbb{R}^+$, we will use the notation $f\preccurlyeq g$ if there exists $c\in \mathbb{N} $ such that $f(n)\leq cg(cn)$ for all $n\in \mathbb{N} $.
 If $f\preccurlyeq g$ and $g\preccurlyeq f$, the functions $f$ and $g$ are said to be $asymptotically$ $ equivalent$ denoted by $f\sim g$. In this case we have 
 $$\lim_{n \to \infty} \displaystyle \frac{\log(f(n))}{\log (n)}= \lim_{n \to \infty} \displaystyle \frac{\log(g(n))}{\log (n)}.$$ Hence, the limit does depend only on the equivalence class of $f$ under $\sim$. This equivalence class of $f$ is called the {\em growth} of $f$.\\
\end{definition}

\begin{definition}
\cite{GKdimension}  Let $A$ be an algebra, which is generated by a finite dimensional subspace $V$. Let $V^n$ denote the span of all products $v_1v_2\cdots v_k,$
$v_i\in V$, $k\leq n$. Then $V=V^1\subseteq V^2\subseteq \cdots$,
\begin{equation*}
   A=\bigcup_{n\geq 1}V^n \ \ \text{and } \ \ g_V(n):=\dim V^n<\infty.
\end{equation*}
If $W$ is another finite-dimensional subspace that generates $A$, then $g_V(n)\sim g_{W}(n)$. If $g_V(n)$ is polynomially bounded, then  the {\em Gelfand-Kirillov} dimension of $A$ is defined as 
\begin{equation*}
    \gkdim (A) := \displaystyle \limsup_{n\rightarrow \infty } \frac{\log g_V(n)}{ \log(n)}.
\end{equation*}
The GK-dimension does not depend on a choice of the generating space $V$ as long as $\dim (V)<\infty$. If the growth of $A$ is not polynomially bounded, then $\gkdim (A)=\infty$.
\end{definition}

Note that if $g_V(n) = n^k$, then $\gkdim (A) = k$. In the case of a finite dimensional algebra, we have that the Gelfand-Kirillov dimension is zero.

In particular, for Leavitt path algebras there is a recent work which classifies all Leavitt path algebras having Gelfand-Kirillov dimension less than $4$, see \cite{Koc}. Moreover, a well-known result linking the Gelfand-Kirillov dimension of a Leavitt path algebra to graphs with the condition (EXC) was formulated. This computation is based on the length of certain chain of cycles on the graph. 
\begin{theorem}\textnormal{\cite{Zelmanov12}}\label{zelgk} 
Let $E$ be a finite  graph.
\begin{enumerate}
\item [\textnormal{(1)}] The Leavitt path algebra $L_K(E)$ has polynomially bounded growth if and only if $E$ satisfies (EXC) condition.
\smallskip
\item [\textnormal{(2)}] If $d_1$ is the maximal length of a chain of cycles in $E$, and $d_2$ is the maximal length of chain of cycles with an exit, then $$\gkdim(L_K(E)) = \max(2d_1-1, 2d_2).$$
\end{enumerate}
\end{theorem}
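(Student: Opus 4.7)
The plan is to control growth by counting the monomials that span $L_K(E)$. As a first step, every element of $L_K(E)$ reduces, via (CK1), to a $K$-linear combination of terms $\mu\nu^*$ with $\mu,\nu\in\mathrm{Path}(E)$ and $r(\mu)=r(\nu)$. Taking $V=\mathrm{span}_K(E^0\cup E^1\cup(E^1)^*)$ as a generating subspace, the filtration piece $V^n$ is spanned by those $\mu\nu^*$ with $l(\mu)+l(\nu)\le n$, so the growth of $L_K(E)$ is controlled by the number of such monomials modulo the identifications produced by (CK2).

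For part (1), I would handle the two directions separately. If (EXC) fails, two distinct cycles $C,D$ share a vertex, and the concatenations $C^{\epsilon_1}D^{\delta_1}\cdots C^{\epsilon_k}D^{\delta_k}$ with $\epsilon_i,\delta_i\in\{0,1\}$ are distinct paths of length at most $k\max(l(C),l(D))$; they are already linearly independent in $KE\subseteq L_K(E)$, so $\dim V^n$ grows exponentially. Conversely, under (EXC) the graph decomposes into pairwise disjoint cycles joined by acyclic bridges, and a path of length $L$ ending at a vertex of the $k$-th cycle of a chain of length $k$ is determined by nonnegative winding numbers $(m_1,\ldots,m_k)$ subject to the single linear constraint $\sum m_i l(C_i)=L-\mathrm{const}$, giving $\sim L^{k-1}$ such paths. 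Pairing them as $\mu\nu^*$ yields only polynomial growth.

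For the upper bound in part (2), I would sum over range vertices $v$ the count of $\mu\nu^*$ with $r(\mu)=r(\nu)=v$ and $l(\mu)+l(\nu)\le n$. The key distinction is how (CK2) behaves at the last cycle $C_k$ of a chain of length $k$. If $C_k$ has no exit, then (CK2) at a vertex of $C_k$ reads $xx^*=v$ for the unique emitted edge $x$, so $x_k^{a}(x_k^{b})^*$ depends only on the difference $a-b\in\Z$; this merges two winding parameters into one integer degree of freedom and gives $\sim n^{2k-1}$ independent monomials at that range. If $C_k$ has an exit $f$, then (CK2) reads $xx^*+ff^*=v$, no such collapse occurs, and the full $\sum_{a+b\le n}a^{k-1}b^{k-1}\sim n^{2k}$ independent monomials survive. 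Taking the maximum over all chains yields the upper bound $\max(2d_1-1,2d_2)$.

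For the matching lower bound I would exhibit, on a realizing chain, explicit monomials $x_1^{a_1}p_1x_2^{a_2}\cdots p_{k-1}x_k^{a_k}(x_1^{b_1}p_1\cdots p_{k-1}x_k^{b_k})^*$ (with $p_i$ a connecting path from $C_i$ to $C_{i+1}$) and verify their linear independence using the $\Z$-grading $\deg(\mu\nu^*)=l(\mu)-l(\nu)$ together with the explicit reduction in the last cycle. The main obstacle is controlling the global effect of (CK2): one must show that, under (EXC), every (CK2) relation is localized at a single vertex and that its net effect on the count is exactly the one-parameter collapse just described, with no further hidden identifications.
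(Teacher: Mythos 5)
The paper does not prove this theorem; it quotes it from \cite{Zelmanov12}, so there is no in-paper argument to compare against. Your sketch is, in outline, the standard proof from that reference: reduce to monomials $\mu\nu^*$ via (CK1), get exponential growth from two cycles sharing a vertex when (EXC) fails, and under (EXC) count pairs of paths ending at a common vertex of the terminal cycle of a chain, with the dichotomy ``no exit $\Rightarrow$ one winding parameter collapses, giving $n^{2k-1}$'' versus ``exit $\Rightarrow$ no collapse, giving $n^{2k}$.'' That is the right structure and the exponents match the claimed formula $\max(2d_1-1,2d_2)$.

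Two points need attention. First, the gap you flag at the end (``no further hidden identifications from (CK2)'') is real as written, but it is closed by a standard tool rather than an ad hoc localization argument: the monomial basis of $L_K(E)$ (\cite[Corollary~1.5.12]{AAS}, which this paper itself uses in Example~\ref{cena_navidad} and in formula~\eqref{countpaths}) says that the $\mu\nu^*$ with $r(\mu)=r(\nu)$ form a basis once one discards exactly those in which $\mu$ and $\nu$ both end in a fixed ``special'' edge at the relevant vertex. With that basis in hand your counting becomes exact: when the terminal cycle has no exit every vertex on it emits a unique (hence special) edge, so the discarded monomials are precisely those winding on both sides, which is your one-parameter collapse $c^a(c^b)^*\mapsto c^{a-b}$; when there is an exit a positive proportion of long paths end in a non-special edge, so the order $n^{2k}$ survives. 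You should also justify the matching lower bound by exhibiting basis monomials rather than re-deriving independence from the $\Z$-grading, which by itself does not separate monomials of equal degree. Second, a minor imprecision: the concatenations $C^{\epsilon_1}D^{\delta_1}\cdots$ with $\epsilon_i,\delta_i\in\{0,1\}$ are not pairwise distinct paths (different exponent strings can represent the same word once zero exponents are allowed); the clean statement is that the closed paths $c,d$ based at the common vertex generate a free submonoid of $\mathrm{Path}(E)$, so there are at least $2^k$ distinct paths of length at most $k(\ell(c)+\ell(d))$, which still yields exponential growth.
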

This theorem was also used to classify Lie bracket algebras over Leavitt path algebras in \cite{BSV22}.

The Gelfand-Kirillov dimension indeed gives us the exponent in the polynomial growth of an algebra. If we have in an algebra $A$ that $g_V(n)\preccurlyeq n^k$ then indeed $\gkdim(A)=k$. \textcolor{black}{In this paper, we will see that the (EXC) condition also plays a role in the entropy.}


\section{Entropy for filtered algebras.}
Whereas the Gellfand-Kirillov dimension is independent of the finite dimensional system of generators chosen, the concept of entropy is more subtle. In particular it only makes sense if there is what we will call a filtration of the algebra. 

A $K$-algebra $A$ is said to be \emph{filtered} if it is endowed with a collection of subspaces $\mathcal{F}=\{V_n\}_{n=0}^\infty$ such that
\begin{enumerate}
\item $0=V_0\subset V_1\subset\cdots\subset V_n\subset V_{n+1}\subset\cdots A$,  
\item $A=\cup_{n\ge 0}V_n$,
\item $V_nV_m\subset V_{n+m}$.
\end{enumerate}

We consider the \emph{category of filtered} $K$-algebras. Its objects are the couples $(A,\F)$ where $\F$ is a filtration on $A$ and its morphisms $(A,\F)$ to $(B,\G)$ are the $K$-algebra morphisms $f\colon A\to B$ such that $f(V_i)\subset W_i$, for all $V_i \in \F$ and $W_i \in \G$ . Note that an isomorphism $f\colon (A,\F)\to (B,\G)$ implies $\dim(V_i)=\dim(W_i)$ for any $i$. Moreover, if the filtrations consist of finite-dimensional subspaces, then 
$f(V_i)=W_i$ for any $i$.

For a graded algebra $A=\oplus_{i=0}^\infty A_i$, its entropy has been defined in Equation  \cite[(1), p. 85]{newman2000entropy} as 
$$H(A)=\limsup_{n\to\infty}\root n \of{\dim (A_n)}.$$
 Now, we are defining the algebraic entropy of a filtered algebra $(A,\F)$, essentially as the logarithm of $H(A)$ of the graded algebra associated to the filtration. 
To be more precise, we choose this particular definition in order to have a certain form of comparability with the Gelfand-Kirillov dimension. If $(A,\F)$ is an algebra with a filtration $\F=\{V_n\}_{n\ge 0}$ of finite-dimensional quotients $V_n/V_{n-1}$, then we can consider the associated graded algebra 
$\text{\bf gr}(A):=\oplus_{i\ge 0}V_{i+1}/V_i$ with product $(x+V_{n-1})(y+V_{m-1}):=xy+V_{n+m-1}$
where $x+V_{n-1}\in V_{n}/V_{n-1}$, $y+V_{m-1}\in V_{m}/V_{m-1}$. Hence, we define the \emph{algebraic entropy of a filtered algebra} $(A,\F)$, $$\h(A,\F):=
\begin{cases}
0 &\text{ if } $A$ \text{ is finite dimensional,} \\
\displaystyle \limsup_{n\to\infty}\frac{
\log\dim(V_n/V_{n-1})}{n} & \text{ otherwise.}
\end{cases}$$

\begin{remark}\rm
The only care we should have with this definition is that in case there is some step in the filtration for which $V_{n-1}=V_n$, then we have a $-\infty$ in the sequence. So the sequence $n^{-1}\log\dim(V_n/V_{n-1})$ takes values in $\mathbb{R}\cup\{-\infty\}$. Thus the above limit could be $-\infty$ if
the filtration stabilizes at some point. But this is only possible when $A$ is finite-dimensional and we have defined $\h(A,\F)=0$ in this case.
\end{remark}

Observe also that $\h(A, \F)=\h(A^{\text{op}},\F)$ where $A^\text{op}$ is the opposite algebra of $A$. In particular, since $(KE)^\text{op}=K E^{\text{op}}$ being $E^\text{op}$ the opposite graph, we have $\h(KE,\F)=\h(KE^\text{op},\F)$.

If there is no doubt about the filtration $\F$ that we are considering in $A$, then we can shorten the notation 
$\h(A,\F)$ to $\h(A)$. However, we have to be careful because the entropy of a filtered algebra depends on the chosen filtration. This can be illustrated in the next proposition.

\begin{proposition}\label{huevosconpatatas}
Let $\F=\{V_n\}$ be a filtration of a finitely generated algebra $A$. For the filtration $\G=\{W_n\}$ such that $W_n:= V_{nk}$ for any $n\in \N$ and a fixed $k\in \N^*$, one has $\h(A,\G)=k \cdot \h(A,\F)$.
\end{proposition}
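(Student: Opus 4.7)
The plan is first to verify that $\G=\{W_n\}$ is indeed a filtration on $A$, which is a routine check using the cofinality of $\{nk\}$ in $\N$ and the fact that $V_{nk}V_{mk}\subset V_{(n+m)k}$. The finite-dimensional case then holds by definition of $\h$, so I will assume throughout that $A$ is infinite-dimensional.

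The core of the argument will be the identity
$$
\dim(W_n/W_{n-1}) \;=\; \sum_{j=(n-1)k+1}^{nk}\dim(V_j/V_{j-1}),
$$
obtained by inserting the intermediate chain $V_{(n-1)k}\subset V_{(n-1)k+1}\subset\cdots\subset V_{nk}$. Writing $a_j:=\dim(V_j/V_{j-1})$ and $M_n:=\max_{(n-1)k<j\le nk} a_j$, the sum above is sandwiched between $M_n$ and $k\, M_n$. After taking logarithms, dividing by $n$ and letting $n\to\infty$, the additive constant $\log k$ washes out, so it suffices to show
$$
\limsup_{n\to\infty}\frac{\log M_n}{n}\;=\;k\cdot\limsup_{j\to\infty}\frac{\log a_j}{j}.
$$

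The upper bound I would handle by a standard $\epsilon$-argument: for $\epsilon>0$ and $j$ large enough, $\log a_j\le(\h(A,\F)+\epsilon)\,j$; since $j\le nk$ in the relevant window this forces $\log M_n\le(\h(A,\F)+\epsilon)\,nk$, and dividing by $n$ and letting $\epsilon\to 0^+$ gives $\h(A,\G)\le k\cdot\h(A,\F)$. For the matching lower bound, I would extract a subsequence $j_i\to\infty$ realizing $(\log a_{j_i})/j_i\to\h(A,\F)$, set $n_i:=\lceil j_i/k\rceil$ so that $j_i\in((n_i-1)k,n_ik]$ and hence $M_{n_i}\ge a_{j_i}$, and use the elementary observation $j_i/n_i\to k$ to evaluate along the subsequence and obtain $\h(A,\G)\ge k\cdot\h(A,\F)$.

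The only point of care, rather than a real obstacle, is the separate treatment of the case $\h(A,\F)=+\infty$ (where the $\epsilon$-bound is vacuous but the same subsequence argument still delivers $\h(A,\G)=+\infty$), together with the observation that infinite-dimensionality of $A$ guarantees infinitely many $a_j$ are nonzero, so that both limsups are genuinely real-valued rather than $-\infty$. Once the bookkeeping around the reindexing $j\mapsto n=\lceil j/k\rceil$ and the accompanying factor-of-$k$ scaling of the denominator is set up, the rest is essentially cosmetic.
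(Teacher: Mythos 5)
Your proposal is correct and follows essentially the same route as the paper: both start from the telescoping identity $\dim(W_n/W_{n-1})=\sum_{j=(n-1)k+1}^{nk}\dim(V_j/V_{j-1})$ and then reduce the limsup of the logarithm of that sum to the limsup over the original sequence rescaled by $k$. The paper does the last step by invoking the identity $\limsup_n \frac{\log(a_n+b_n)}{n}=\max\{\limsup_n\frac{\log a_n}{n},\limsup_n\frac{\log b_n}{n}\}$ applied to the $k$ residue-class subsequences, while you sandwich the sum between the window maximum $M_n$ and $kM_n$ and argue with $\epsilon$'s and a subsequence; these are interchangeable elementary bookkeeping devices for the same idea.
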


\begin{proof} First, for every $i$, we have that $\displaystyle \dim(V_{ki}/V_{ki-k}) = \sum_{j=0}^{k-1} \dim(V_{ki-j}/V_{ki-j-1})$. Then, 
\begin{eqnarray*}
    \h(A,\mathcal{G})&=&  \limsup_{i \to \infty}\frac{\log\left (\dim(V_{ki}/V_{ki-k}) \right )}{i}\\
   &=& \limsup_{i \to \infty} \frac{1}{i}\log \left (\displaystyle\sum_{j=0}^{k-1}\dim(V_{ki-j}/V_{ki-j-1})\right ).
\end{eqnarray*}

Secondly, take into account that for two sequences $\{a_n\}_{n\geq 0}$ and $\{b_n\}_{n\geq 0}$ with $a_n,\ b_n>0$ the following equality holds: $$\limsup_{n \to \infty} \frac{\log(a_n +b_n)}{n}=\max \left \{\limsup_{n \to \infty} \frac{\log(a_n)}{n}, \limsup_{n \to \infty}\frac{\log(b_n)}{n}\right \}.$$
Consequently, we have

\begin{eqnarray*}
\h(A,\mathcal{G})&=&\max_{0\leq j\leq k-1}\left \{\limsup_{i \to \infty} \frac{ki-j}{i} \frac{\log(\dim(V_{ki-j}/V_{ki-j-1}))}{ki-j} \right \}\\
&=& k \cdot \max_{0 \leq j \leq k-1}\left \{\limsup_{i \to \infty} \frac{\log(\dim(V_{ki-j}/V_{ki-j-1}))}{ki-j} \right \}\\
&=& k \cdot \h(A,\F).
\end{eqnarray*}

\end{proof}
\begin{remark}\rm
    Observe that Proposition \ref{huevosconpatatas} implies that it is not possible to obtain a well-defined entropy (similar to the definition in \cite{dikranjan2012connection}) via the supremum since if we have a filtration which leads us to a nonzero value of the entropy, we obtain a sequence of filtrations with increasing entropy. 
\end{remark}

If there is a filtration for which the entropy is finite and nonzero, we can obtain a family of filtrations with increasing entropy. However, if the value of the entropy of this first filtration is zero, the entropy remains constant (equal to zero) for the families of filtrations mentioned below. The following result may suggest that if the entropy for a given filtration is zero, then  the entropy is zero for any filtration.

\begin{proposition}\label{sopadecebolla} Assume that $A$ is a finitely generated algebra and $\h(A,\F)=0$ for a filtration $\F=\{V_n\}$. Then we have the following.
\begin{enumerate}
    \item For any other filtration $\G=\{W_n\}$ such that $W_n\subset V_n$ for any $n$, one has $\h(A,\G)=0$.
    \item For the filtration $\G=\{W_n\}$ such that $W_n:= V_{nk}$ for any $n$ and a fixed $k$, one has $\h(A,\G)=0$.
    \item \label{mandarina} For any other filtration $\G=\{W_n\}$ such that $W_1$ is finite dimensional and 
    $W_k=(W_1)^k$ (for any $k$), one has $\h(A,\G)=0$.
\end{enumerate}
\end{proposition}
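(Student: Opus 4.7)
The plan is to reduce all three parts to a single auxiliary fact: if $\h(A,\F)=0$, then $\limsup_{n\to\infty}\frac{\log \dim V_n}{n}=0$. I would establish this by fixing $\varepsilon>0$, using the hypothesis to find $N$ with $\dim(V_i/V_{i-1})\le e^{\varepsilon i}$ for all $i\ge N$, telescoping $\dim V_n=\sum_{i=1}^{n}\dim(V_i/V_{i-1})$, and estimating the tail by a geometric series to obtain $\dim V_n\le C\, e^{\varepsilon n}$; taking logarithms, dividing by $n$, and letting $\varepsilon\to 0^+$ yields the claim. The lower bound $\h(A,\G)\ge 0$ is automatic when $A$ is infinite-dimensional (since $\dim W_n\to\infty$ forces $\dim(W_n/W_{n-1})\ge 1$ infinitely often), and $\h$ equals $0$ by definition when $A$ is finite-dimensional, so in all cases it will suffice to prove the matching upper bound.

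For part (1), the containments $W_{n-1}\subseteq W_n\subseteq V_n$ give $\dim(W_n/W_{n-1})\le \dim W_n \le \dim V_n$, hence
\[
\h(A,\G)=\limsup_{n\to\infty}\frac{\log\dim(W_n/W_{n-1})}{n}\le \limsup_{n\to\infty}\frac{\log \dim V_n}{n}=0
\]
by the auxiliary fact. Part (2) is an immediate consequence of Proposition \ref{huevosconpatatas}, which gives $\h(A,\G)=k\cdot\h(A,\F)=0$. For part (3), the finite-dimensionality of $W_1$ together with $A=\bigcup_m V_m$ furnishes some $m$ with $W_1\subseteq V_m$, and then iterating the filtration axiom $V_aV_b\subseteq V_{a+b}$ yields $W_n=(W_1)^n\subseteq V_m^{\,n}\subseteq V_{mn}$; consequently
\[
\frac{\log\dim(W_n/W_{n-1})}{n}\le \frac{\log\dim V_{mn}}{n}= m\cdot\frac{\log\dim V_{mn}}{mn},
\]
and the $\limsup$ of the right-hand side vanishes by the auxiliary fact, so $\h(A,\G)\le 0$.

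The main technical hurdle is the auxiliary fact itself: because $\h$ is defined only in terms of single-step quotient dimensions, one must confirm that the accumulated sum $\dim V_n=\sum_i\dim(V_i/V_{i-1})$ does not inflate the exponential rate. This is precisely the geometric-series tail bound sketched above, and it is the only nontrivial ingredient; once it is in hand, each of the three parts reduces to a short containment argument of the form $W_n\subseteq V_{f(n)}$ with $f(n)$ linear in $n$.
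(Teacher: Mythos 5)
Your proposal is correct and follows essentially the same route as the paper: the key step in both is the geometric-series tail bound showing $\dim V_n \le He^{\varepsilon n}+k'$, which the paper uses directly for part (1), with part (2) cited from Proposition \ref{huevosconpatatas} and part (3) obtained from the containment $W_n=(W_1)^n\subseteq V_{mn}$. The only cosmetic difference is that the paper packages part (3) by introducing the intermediate filtration $Z_n:=V_{nm}$ and invoking parts (2) and (1), whereas you apply the auxiliary bound to the subsequence $\dim V_{mn}$ directly; these are the same argument.
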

\begin{proof}
 Since we have $\limsup_{n\to\infty}\frac{\log \Delta\dim(V_n)}{n}=0$ (where $\Delta x_n=x_n-x_{n-1}$ for $n>1$), we have $\lim_{n\to\infty}\frac{\log\Delta\dim(V_n)}{n}=0$ and for any
$\varepsilon>0$ there is some $n_0$ such that 
\begin{equation}\label{ladrido}
e^{-\varepsilon n}< \Delta\dim(V_n)<e^{\varepsilon n}, 
\end{equation}
when $n>n_0$. Then $$e^{-\varepsilon(n_0+1)}+\cdots+e^{-\varepsilon n}<\sum_{i=n_0+1}^n\Delta\dim(V_i)<e^{\varepsilon(n_0+1)}+\cdots+e^{\varepsilon n}.$$
Thus, there is a constant $k$ such that
 $$k+\sum_{i=n_0+1}^ne^{-\varepsilon i}<\dim(V_n)<k+\sum_{i=n_0+1}^ne^{\varepsilon i}.$$
So $\dim(W_n)< k+\sum_{i=n_0+1}^ne^{\varepsilon i}$ implying 
$\Delta\dim(W_n)< k+\sum_{i=n_0+1}^ne^{\varepsilon i}$.
But $\sum_{i=n_0+1}^ne^{\varepsilon i}+k=He^{\varepsilon n}+k'$ for some constants $k',H$. In consequence, we have that $\log \left (\Delta\dim(W_n)\right )<
\log\left(H e^{n\varepsilon}+k'\right)$. Finally, $\h(A,\G)\le \displaystyle \lim_{n\to\infty}
\frac{\log\left(H e^{ n\varepsilon}+k'\right)}{n}=\varepsilon$.

The second item is a direct consequence of Proposition \ref{huevosconpatatas}.

In order to prove the third item, we know that $W_1\subset V_k$ for some $k$ that we fix and define next the new filtration $\mathcal{H}:=\{Z_n\}$ where $Z_n:=V_{nk}$ for any $n$. By the previous item (2), $\h(A,\mathcal{H})=\h(A,\F)=0$. On the other hand, for any $n$ we have 
$W_n=(W_1)^n\subset V_k^n\subset V_{nk}=Z_n$ hence again by the previously proved item (1) we have $\h(A,\G)=\h(A,\mathcal{H})=0$.
\end{proof}

\begin{definition}\rm  Let $\{a_n\}$ and $\{b_n\}$ be two sequences of positive real numbers. We say that $\{a_n\}$ has the same \emph{order} of $\{b_n\}$ and we denote it by $\Theta(a_n)=\Theta(b_n)$ if there exists $c \in {\mathbb R}^{\times}$ such that $\displaystyle \lim_{n\to \infty} \frac{a_n}{c \cdot b_n}=1$. Also we will write $\Theta(a_n) > \Theta(b_n)$ if $\displaystyle \lim_{n \to \infty} \frac{a_n}{b_n}=\infty$.
\end{definition}

\begin{proposition} \label{ducha}
Assume $a_n, b_n$ are positive  sequences, with $b_n$ strictly increasing and $\lim_{n\to\infty}b_n=\infty$. Furthermore, let $f$ be a strictly monotone increasing function with $\lim_{n\rightarrow \infty }f(n)=\infty$. We will use the 
notation $\Delta a_n:=a_n-a_{n-1}$ and similarly for $b_n$, $(n>1)$. If $\Theta(a_n)<\Theta(b_n)$, then we have
\begin{enumerate}
    \item $\Theta(\Delta a_n)<\Theta(\Delta b_n)$,
    \item $\displaystyle \limsup_{n\to\infty}\frac{\log(a_n)}{f(n)}\le \limsup_{n\to\infty}\frac{\log(b_n)}{f(n)}$,  
    \item  $\displaystyle \limsup_{n\to\infty}
\frac{\log(\Delta a_n)}{f(n)}\le \limsup_{n\to\infty}\frac{\log(\Delta b_n)}{f(n)}$. 
\end{enumerate}
\end{proposition}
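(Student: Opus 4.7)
The three claims are tightly linked: once (1) is in hand, parts (2) and (3) become the same ``take logs, divide by $f(n)$, pass to $\limsup$'' routine applied to $(a_n,b_n)$ and to $(\Delta a_n,\Delta b_n)$ respectively. My plan is therefore to dispose of (2) first as a warm-up, reduce (3) to the same computation once (1) is available, and concentrate the real effort on (1).

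For (2), the hypothesis $\Theta(a_n)<\Theta(b_n)$ unpacks to $a_n/b_n\to 0$. Fix $\varepsilon>0$; there is some $n_0$ with $a_n<\varepsilon\, b_n$ for $n\ge n_0$, hence $\log a_n<\log b_n+\log\varepsilon$. Since $f(n)\to\infty$ is eventually positive, dividing by $f(n)$ and passing to $\limsup$ absorbs the constant $\log\varepsilon$, producing the stated inequality. The identical argument applied to $(\Delta a_n,\Delta b_n)$ yields (3), provided one already has (1); note that $\Delta b_n>0$ by strict monotonicity of $b_n$, and positivity of $\Delta a_n$ is implicit in asking for $\log\Delta a_n$ to make sense.

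The obstacle is (1): deducing $\Delta a_n/\Delta b_n\to 0$ from $a_n/b_n\to 0$. Writing $a_n=a_1+\sum_{k=2}^{n}\Delta a_k$ and analogously for $b_n$, the hypothesis only says the partial-sum ratio vanishes, and this is exactly the regime in which the converse of Stolz--Ces\`aro famously fails in full generality. My plan is to exploit the asymptotic-order formulation directly: for any $\varepsilon>0$, eventually $a_n\le\varepsilon\, b_n$, and then a telescoping comparison against the strictly increasing $b_n$ should force $\Delta a_n\le\varepsilon'\Delta b_n$ for $n$ large enough. The delicate step I expect is ruling out the pathological scenario where $a_n$ stays small on average but has occasional spikes of size comparable to $\Delta b_n$; in the paper's intended applications, where $a_n,b_n$ arise as dimensions of filtration pieces $V_n$ subject to $V_nV_m\subset V_{n+m}$, the multiplicative structure of the filtration should supply exactly the regularity needed to suppress such spikes and close the argument.
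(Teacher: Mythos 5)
Your handling of (2), and of ``(3) given (1),'' matches the paper's: the paper likewise passes from ``$b_n>Ma_n$ for every $M$ and all large $n$'' to the inequality of $\limsup$'s, and deduces (3) from (1) in exactly the way you describe. The problem is (1) itself, and there you have correctly located the difficulty but not resolved it: what you offer is a plan, and the ``pathological scenario'' you flag---occasional spikes of $\Delta a_n$ of size comparable to $\Delta b_n$---cannot be excluded from the stated hypotheses. In fact (1) is false as stated. Take $b_n=n$ and $a_n=\sum_{k=2}^{n}c_k$, where $c_k=1$ if $k$ is a power of $2$ and $c_k=k^{-2}$ otherwise. Then $a_n$ is positive and strictly increasing, $a_n\le \pi^2/6+\log_2 n$, so $a_n/b_n\to 0$ and $\Theta(a_n)<\Theta(b_n)$; yet $\Delta b_n/\Delta a_n=1$ whenever $n$ is a power of $2$, so $\Delta b_n/\Delta a_n\not\to\infty$ and the conclusion $\Theta(\Delta a_n)<\Theta(\Delta b_n)$ fails. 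Your instinct to invoke the submultiplicative structure $V_nV_m\subset V_{n+m}$ of a filtration is the right kind of repair, but no such hypothesis appears in the proposition, so as written your argument for (1)---and hence for (3)---is incomplete.

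You should also know that the paper's own one-line proof of (1) does not close this gap either: it cites the Stolz--Ces\`aro inequality $\limsup a_n/b_n\le\limsup \Delta a_n/\Delta b_n$, which bounds $\limsup \Delta a_n/\Delta b_n$ from \emph{below} by $0$ and therefore cannot force $\Delta a_n/\Delta b_n\to 0$; what is needed is precisely the converse of Stolz--Ces\`aro, which, as you suspected, fails in general. So the missing step is not something you overlooked---under the stated hypotheses it cannot be supplied---but a complete proof would require either weakening claim (1) (and deriving (3) directly, e.g.\ via $\Delta a_n\le a_n$ where that suffices) or adding a regularity hypothesis on $a_n$.
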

\begin{proof} For the first assertion we use the known formula $\displaystyle \limsup\frac{a_n}{b_n}\le\limsup\frac{\Delta a_n}{\Delta b_n}$.
For the second item we know that for any positive $M$ we have $b_n>M a_n$ for large enough $n$. Finally, the last assertion is a consequence of the first one because this means that for any positive $N$ we have that $\Delta b_n > N \Delta a_n$ for large enough $n$. 
\end{proof}

\begin{lemma}\label{manteca}
Let $A$ be a $K$-algebra and $\F = \{V_n\}$ a such that $V_1$ is a finite dimensional system of generators and verifying $(V_1)^n = V_n$. Suppose that the sequence $\{\dim(V_n/V_{n-1})\}$ is bounded, then $\gkdim(A)\le 1$. Furthermore, if $\{\dim(V_n/V_{n-1})\}$ is convergent, then $\gkdim(A)=1$.
\end{lemma}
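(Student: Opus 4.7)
The plan is to use the fact that, since $V_1$ is a finite-dimensional generating subspace of $A$ with $V_n = (V_1)^n$, the quantity $g_{V_1}(n)$ from the definition of Gelfand-Kirillov dimension coincides with $\dim V_n$. Hence the problem reduces to estimating $\dim V_n$ in terms of the increments $\dim(V_n/V_{n-1})$.

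First I would prove the upper bound. By telescoping, $\dim V_n = \sum_{k=1}^n \dim(V_k/V_{k-1})$ (with $V_0 = 0$). If the sequence $\{\dim(V_n/V_{n-1})\}$ is bounded by some constant $M$, then $\dim V_n \le Mn$, so
\[
\gkdim(A) \;=\; \limsup_{n\to\infty}\frac{\log\dim V_n}{\log n}\;\le\; \limsup_{n\to\infty}\frac{\log(Mn)}{\log n}\;=\;1.
\]

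For the convergent case, write $\dim(V_n/V_{n-1})\to L$. Since the terms are non-negative integers, $L\in\N$ and the sequence is eventually constant equal to $L$; in particular, if $A$ is infinite-dimensional then $L\ge 1$ (otherwise $V_n=V_{n-1}$ from some point on, and the hypothesis $V_n=(V_1)^n$ forces the filtration to stabilise, contradicting $A=\bigcup V_n$ infinite). Assuming $L\ge 1$, the classical Cesàro--Stolz theorem applied to $\dim V_n = \sum_{k=1}^n \dim(V_k/V_{k-1})$ yields $\dim V_n/n\to L$, hence $\log\dim V_n = \log n + \log L + o(1)$ and therefore $\log\dim V_n/\log n\to 1$. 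Combined with the upper bound just established, this gives $\gkdim(A)=1$.

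The main subtlety is the edge case $L=0$ in the second part: there the filtration terminates, $A$ is finite-dimensional, and $\gkdim(A)=0$ rather than $1$; so the lemma should be read with the implicit hypothesis that $A$ is infinite-dimensional (equivalently, $L\ge 1$). Beyond this bookkeeping point, no substantive obstacle arises — the argument amounts to the observation that bounded increments give at most linear growth, while increments converging to a positive limit give asymptotically linear growth with the correct slope.
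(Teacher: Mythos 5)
Your proof is correct and follows essentially the same route as the paper's: a telescoping sum bounds $\dim V_n$ by $Mn$ for the first claim, and the eventual constancy of the convergent integer sequence $\{\dim(V_n/V_{n-1})\}$ gives asymptotically linear growth of $\dim V_n$ for the second (the paper writes the explicit linear formula where you invoke Ces\`aro--Stolz, a cosmetic difference). Your remark about the edge case $L=0$ identifies a genuine, if minor, omission in the lemma's statement that the paper's own proof also glosses over; that case is treated separately as item (1) of the proposition immediately following the lemma, confirming that the second assertion implicitly assumes $A$ infinite dimensional.
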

\begin{proof}
If we consider $\dim(V_n/V_{n-1}) < M$ for all $n$, then $$0\le \sum_{k=2}^n \left [\dim(V_k)-\dim(V_{k-1}) \right ] < (n-1)M.$$ If we put $k:=\dim(V_1)$, then 
$0\le \dim(V_n)-k < (n-1)M$ or equivalently 
$$k\le \dim(V_n) < (n-1)M+k,$$
$$\log k\le \log \dim(V_n) < \log[(n-1)M+k],$$
$$\frac{\log k}{\log n}\le \frac{\log \dim(V_n)}{\log n} < \frac{\log((n-1)M+k)}{\log n}$$
and taking limits we get that $\gkdim(A)\le 1$.  

If the sequence $\{\dim(V_n/V_{n-1})\}$ is not only bounded but convergent, we can go a little further. If $\lim_{n\to \infty} \dim \left (V_n/V_{n-1} \right )= c$, taking into account that  $\dim  \left (V_n \right )$ is a sequence of natural numbers, we have that $\dim \left (V_n/V_{n-1} \right ) = c$ for every $n\geq k_0$ with $k_0 \in \mathbb{N}$. Thus, $\dim (V_n )- \dim (V_{n-1}) = c$ and $\dim ( V^n) = \dim (V_n) = \dim (V_{n-1}) + c$ for every $n\geq k_0$. Finally, $\dim (V^n) = \dim (V_{k_0}) + c(n-k_0)$ and we can compute
\begin{equation*}
    \gkdim(A) = \limsup_{n \to \infty} \frac{\log \left (\dim ( V^n) \right )}{\log n} = \limsup_{n\to \infty} \frac{\log \left ( \dim (V_{k_0})+ c(n-k_0) \right )}{\log n} = 1. 
\end{equation*}
\end{proof}

\begin{proposition}
Suppose that $A$ is a $K$-algebra and $\F = \{V_n\}$ a filtration of $A$ with $V_1$ a finite dimensional system of generators with $(V_1)^n = V_n$. 
\begin{enumerate}
    \item If $\lim\limits_{n\to \infty} \dim(V_n/V_{n-1}) = 0$, then 
    $h_{alg} (A)=0$ 
   and 
    $
    \mathrm{GKdim}(A) = 0.
    $
    \item If $\lim\limits_{n\to \infty} \dim(V_n/V_{n-1}) =  c>0$, then 
    $h_{alg} (A)=0$ 
   and 
    $
    \mathrm{GKdim}(A) = 1.
    $
    \item If $\dim(V_n) =  \Theta(n^k)$  for some $k \in \mathbb{N}^*$, then 
    $
    h_{alg} (A) =0,
    $
    and 
    $
     \mathrm{GKdim}(A) =k$.
  \item   If $\dim(V_n)=\Theta(a^n)$, then $\h(A)=\log(a)$ and $\gkdim(A)=\infty$.
\end{enumerate}
\end{proposition}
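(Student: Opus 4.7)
\medskip
\noindent\textbf{Proof proposal.} The plan is to handle the four cases separately, exploiting the hypothesis $(V_1)^n=V_n$ so that $g_{V_1}(n)=\dim(V_n)$ and the GK-dimension can be read directly from the filtration. Throughout, I will use that $\dim(V_n)$ is a non-decreasing integer sequence, hence $\Delta\dim(V_n):=\dim(V_n)-\dim(V_{n-1})\ge 0$.

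For item (1), since $\dim(V_n/V_{n-1})=\Delta\dim(V_n)$ is a sequence of nonnegative integers converging to $0$, it must be eventually $0$. Therefore $V_n=V_{n-1}$ for all $n$ large, which forces $A=\bigcup_n V_n$ to be finite-dimensional; then $\h(A)=0$ by definition and $\gkdim(A)=0$.  Item (2) is immediate from Lemma~\ref{manteca}, which gives $\gkdim(A)=1$; for the entropy, the integer sequence $\dim(V_n/V_{n-1})$ converges to $c>0$ and therefore is eventually equal to $c$, so $n^{-1}\log\dim(V_n/V_{n-1})=n^{-1}\log c\to 0$.

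For item (3), from $\dim(V_n)=\Theta(n^k)$ we obtain $\dim(V_n)=cn^k(1+o(1))$ for some $c>0$, so $\log\dim(V_n)/\log n\to k$ and hence $\gkdim(A)=k$. For the entropy, use the trivial bound $\Delta\dim(V_n)\le\dim(V_n)\le(c+1)n^k$ for large $n$, giving $\log\Delta\dim(V_n)\le k\log n+O(1)$; dividing by $n$ shows $\limsup_n n^{-1}\log\Delta\dim(V_n)\le 0$. Since $k\ge 1$ ensures $\dim(V_n)\to\infty$, the algebra is infinite-dimensional and $\Delta\dim(V_n)\ge 1$ infinitely often, so the $\limsup$ is also $\ge 0$; hence $\h(A)=0$.

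Item (4) is the one where a little care is needed. From $\dim(V_n)=\Theta(a^n)$ write $\dim(V_n)=ca^n(1+\varepsilon_n)$ with $\varepsilon_n\to 0$; then $\log\dim(V_n)\sim n\log a$ and $\log\dim(V_n)/\log n\to\infty$, giving $\gkdim(A)=\infty$. For the entropy, compute
\begin{equation*}
\Delta\dim(V_n)=ca^n(1+\varepsilon_n)-ca^{n-1}(1+\varepsilon_{n-1})=ca^{n-1}\bigl((a-1)+a\varepsilon_n-\varepsilon_{n-1}\bigr),
\end{equation*}
and since $a>1$ the bracketed factor tends to $a-1>0$; thus $\log\Delta\dim(V_n)=(n-1)\log a+O(1)$, and dividing by $n$ yields $\h(A)=\log a$. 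The main obstacle is precisely this last step: one must argue that the first-order differencing of an asymptotically geometric sequence is itself asymptotically geometric with the same ratio, which is what the explicit computation above achieves, and then verify that the sub-leading errors $\varepsilon_n$ do not accidentally produce cancellations that would destroy the geometric growth of the differences.
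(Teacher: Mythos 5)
Your proposal is correct and follows the same overall route as the paper: reduce item (1) to finite-dimensionality via integrality of the sequence $\dim(V_n/V_{n-1})$, deduce item (2) from Lemma~\ref{manteca}, and in items (3) and (4) translate the $\Theta$-hypothesis into two-sided bounds on $\dim(V_n)$ before taking logarithms. The one place where you genuinely add something is item (4), and it is an addition the argument actually needs: the paper computes $\h(A)=\limsup_n \log(ca^n)/n$ as though the entropy were defined from $\dim(V_n)$, when by definition it is $\limsup_n \log\dim(V_n/V_{n-1})/n$; passing from the growth of $\dim(V_n)$ to the growth of the successive quotients is not automatic (a priori the differences of an asymptotically geometric sequence could oscillate). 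Your explicit computation $\Delta\dim(V_n)=ca^{n-1}\bigl((a-1)+a\varepsilon_n-\varepsilon_{n-1}\bigr)$, with the bracket tending to $a-1>0$, closes exactly this gap (note it uses $a>1$, which is forced anyway by the claim $\gkdim(A)=\infty$). Similarly, in item (3) your observation that $\Delta\dim(V_n)\ge 1$ infinitely often for an infinite-dimensional algebra supplies the lower bound $\h(A)\ge 0$ that the paper leaves implicit. These are refinements of, not departures from, the paper's proof.
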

\begin{proof}
    \begin{enumerate}
        \item If $ \lim_{n\to \infty}\dim \left (V_n/V_{n-1} \right )= 0$, then $V^n = V^{n-1}$ for every $n\geq k_0$. This means that $A = \cup_{n=1}^\infty V_n = V_{k_0}$ and $A$ is finite dimensional. Then $\h(A) = \gkdim(A) = 0$. 
        \item It is a consequence of  Lemma \ref{manteca}.
        \item If $\dim(V_n)=\Theta(n^k)$, then there is a constant $c$ such that $\displaystyle \lim_{n\to\infty}\frac{\dim(V^n)}{c n^k}=1 $. Thus $1-\varepsilon <\displaystyle  \frac{\dim(V_n)}{cn^k}<1+\varepsilon$ for large enough $n$. So $(1-\varepsilon)c n^k< \dim(V_n)<(1+\varepsilon)c n^k$. Then
   $\log[(1-\varepsilon)c n^k]< \log \dim(V_n)<\log[(1+\varepsilon)c n^k]$ and
   $\displaystyle \frac{\log[(1-\varepsilon)c n^k]}{\log n}< \frac{\log \dim(V_n)}{\log n}<\frac{\log[(1+\varepsilon)c n^k]}{\log n}$
   and taking limits we get $\gkdim(A)=k$. Next we compute the entropy. We have seen that (for large enough $n$) we have
   $(1-\varepsilon)c n^k< \dim(V_n)<(1+\varepsilon)c n^k$ and taking $\varepsilon=1$ we have
   $$\dim(V_n)<2 c n^k$$
   and consequently $$\dim(V_n/V_{n-1})<2cn^k$$
   $$\log \dim(V_n/V_{n-1})<\log 2cn^k$$
   $$\frac{\log \dim(V_n/V_{n-1})}{n}<\frac{\log 2cn^k}{n}$$ and taking limits we get 
   $\h(A)=0$.
   \item  Suppose $\dim(V_n) =  \Theta(a^n)$, for some $a \in \mathbb{R}$. It is known  that if $\{x_n\},\{y_n\}$ are two sequences such that $\lim_{n \to \infty} y_n=\infty$ and $\lim_{n \to \infty} \frac{x_n}{y_n}=1$, then $\lim_{n \to \infty}\frac{\log(x_n)}{\log(y_n)}=1$. So, assume that $\lim_{n \to \infty} \frac{\dim(V_n)}{ca^n}=1$ for a certain $c\ne 0$. Then 
    $$
     \mathrm{GKdim}A = \limsup_{n \to \infty}\frac{\log(\dim(V_n))}{\log n}= \limsup_{n \to \infty}\frac{\log(ca^n)}{\log n}=
     \infty,
    $$
    and
    $$
    h_{alg} (A) = \limsup_{n \to \infty} \frac{\mathrm{log}(ca^n)}{n} =\mathrm{log}(a).
    $$
    \end{enumerate}
\end{proof}

One of the most intensively used properties of a Leavitt path algebra, but also of path algebras themselves is that $L_K(E)$, $KE$ and $K\hat E$ are $\mathbb{Z}$-graded $K$-algebras. Many structural results of path algebras are based on their grading. For the particular case of Leavitt path algebras we refer to \cite{AAS}, but also the works on talented monoids and graded ideals \cite{Hazrat}. 
This grading will motivate the standard filtration for the path algebra $KE$ and for the Leavitt path algebra $L_K(E)$.

\begin{definition}\label{churros} \rm For $KE$ we define the filtration $\{V_i\}_{i\in\N}$ where $V_0$ is the linear span of the set of vertices of the graph $E$, while
$V_1$ is the sum of $V_0$ with the linear span of the set of edges, and $V_{k+1}$ linear span of the set of paths of length less or equal to $k+1$. We will call this the {\em standard filtration on $KE$}. 
\end{definition}
\begin{definition}\rm 
\label{teje}
For $L_K(E)$ we define its {\it standard filtration} $\{W_i\}_{i \in \mathbb{N}}$  so that $W_0$ is  the linear span of the set of vertices of $E$, being $W_1$ the sum of $W_0$ plus the linear span of the set
$E^1\cup (E^1)^*$. For $W_{k}$ we take the linear span of the set of elements:  $\lambda\mu^*$ with $l(\lambda)+l(\mu)\le k$. 
\end{definition} 

From now on, any path algebra $KE$ will be understood to be endowed with its standard filtration by default (and the same applies to $L_K(E)$).

\begin{remark}\rm
For a graph with finite-dimensional path algebra, it is easy to see that we obtain entropy $0$. This is the case in particular for a finite acyclic graph $E$, since for some $m$, $E$ will have zero paths of length greater or equal $ m$. 
\end{remark}

In order to compute the Gelfand-Kirillov dimension of a path or of a Leavitt path algebra $A$, we can consider their corresponding standard filtration, say $\{V_n\}_{n\ge 0}$. In each case, the space $V_1$ is a finite-dimensional system of generators. So, in order to compute $\gkdim(A)$ we can take $W=V_1$ so that $W^n=V_n$ and $\displaystyle \gkdim(A)=\lim_{n\to\infty} \frac{\log(\dim (W^n))}{\log n}$. The equality $W^n = V_n$ for $n\ge 2$ comes from the definition of the standard filtration. The space generated by the paths of length less than or equal to $n$ coincides with the space generated by the products of less than or equal to $n$ generators of the algebra.

\medskip

To end this section, we consider the $n$-petals rose graph $R_n$, that is, one vertex and $n$ loops. We can observe how the finite entropy of $L_K(R_n)$ is depending of $n$ while the Gelfand-Kirillov dimension (for $n \geq 2$) is always infinite. This is a first example that shows how the entropy allows us to differentiate algebras with the same Gelfand-Kirillov dimension.

\begin{example}\label{cena_navidad}\rm
Take the directed graph $E=R_n$ of the $n$-petals rose graph with one vertex $v$ and $n$ loops $f_1,\ldots,f_n$ (so $s(f_i)=r(f_i)=v$ for any $i$). Consider its standard filtration.

Note that 

$\dim(V_1)-\dim(V_0)=2n$,

$\dim(V_2)-\dim(V_1)=3n^2-1$,

$\dim(V_3)-\dim(V_2)=4n^3-2n$, and proceeding in this way.

A system of generators of $V_k$ is that of 
$V_{k-1}$ plus the elements of $(\fl)^k\cup(\fl)^{k*}\cup\left(\cup_{i+j=k} (\fl)^i(\fl^{j*})\right)$. To get a basis we must remove from each $(E^1)^i(E^{1*})^j$ the elements $(E^1)^{i-1}f_1f_1^*(E^{1*})^{j-1}$ (so remove $n^{i+j-2}=n^{k-2}$ elements) (apply \cite[Corollary 1.5.12]{AAS}). 
This gives $\dim(V_k)-\dim(V_{k-1})=n^k+n^k+(k-1)(n^k-n^{k-2})=
(k+1)n^k-(k-1)n^{k-2}$

Since $\dim(V_k/V_{k-1})=(k+1)n^k-(k-1)n^{k-2}$ we have 
$$\h(L_k(R_n))=\limsup_{k\to\infty}\frac{\log[(k+1)n^k-(k-1)n^{k-2}]}{k}$$
but $$\lim_{k\to\infty}\frac{\log[(k+1)n^k-(k-1)n^{k-2}]}{k}=
\lim_{k\to\infty}\log\left[\frac{(k+1)n^k-(k-1)n^{k-2}}{kn^{k-1}-(k-2)n^{k-3}}\right]=$$
$$\lim_{k\to\infty}\log\left[\frac{n^{k-2}[(k+1)n^2-(k-1)]}{n^{k-3}[k n^2-(k-2)]} \right]=\lim_{k\to\infty}\log \left[n\ \frac{(k+1)n^2-(k-1)}{k n^2-(k-2)} \right]=$$
$$\lim_{k\to\infty}\log \left[n\ \frac{k(n^2-1)+n^2+1}{k (n^2-1)+2)} \right]=\lim_{k\to\infty}\log \left[n\ \frac{(n^2-1)+\frac{n^2+1}{k}}{ (n^2-1)+\frac{2}{k})} \right]=\log(n).$$
In conclusion we have that $L_K(R_n)=\log(n)$.
\end{example}

\section{Behaviour of the entropy relative to different constructions. Morita equivalence. }
In this section we study how the entropy of an algebra behaves under epimorphisms, monomorphisms, direct sums and Morita equivalence. If we have algebras related by an epimorphism or a monomorphism, their corresponding entropies will be related by an inequality. Thus, we can give boundaries for the entropy of the Leavitt path algebra $L_K(E)$ depending on the entropy of the corresponding path algebras $KE$ and $K\hat{E}$. The relation between entropy and direct sums will be useful because since it will allow us to compute the entropy of path algebras and Leavitt path algebras associated to disconnected graphs. 

\subsection{Epimorphisms and entropy}
Consider an epimorphism of algebras $f\colon A\to B$. If $\F$ is a filtration on $A$, then we can construct a filtration $\G$ on $B$ simply applying $f$ to the subspaces of $\F$. So we have an epimorphism in the category of filtered algebras which we denote as 
$f\colon (A,\F)\to (B,\G)$. If there is no danger of confusion, we use the same symbol $f$ for both the epimorphism of algebras and of filtered algebras.

As the epimorphisms contract dimensions, we have  $$\dim \left ( f(V_i)/f(V_{i-1}) \right )\le \dim \left ( V_i/V_{i-1} \right ). $$ 

Observe that the canonical epimorphism $p\colon K\hat E\to L_K(E)$ satisfies $p(V_i)=W_i$, as can be proved, for instance, by using induction on $i \in \N$. Consequently, we have the following Lemma.

\begin{lemma} 
For the  epimorphism of filtered algebras $f\colon (A,\F)\to (B,\G)$ constructed above, one has $\h(B)\le \h(A)$.
  In particular, for a finite graph $E$
\begin{equation}\label{acot}
\h(L_K(E))\le \h(K\hat E).
\end{equation}
\end{lemma}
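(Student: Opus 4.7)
The plan is to prove the abstract statement directly from the definitions and then reduce the Leavitt path algebra case to the abstract one via the canonical epimorphism $p\colon K\hat{E}\to L_K(E)$.

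First, I would verify that the construction $W_i := f(V_i)$ actually yields a filtration $\G$ on $B$: the chain $W_0\subset W_1\subset\cdots$ is immediate from $V_{i-1}\subset V_i$, the equality $B=\bigcup_i W_i$ follows from surjectivity of $f$ together with $A=\bigcup_i V_i$, and the multiplicative property $W_nW_m\subset W_{n+m}$ follows from $V_nV_m\subset V_{n+m}$ and the fact that $f$ is an algebra homomorphism. Thus $f\colon(A,\F)\to(B,\G)$ is indeed a morphism in the category of filtered algebras.

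The main step is the pointwise inequality $\dim(W_i/W_{i-1})\le \dim(V_i/V_{i-1})$. Composing the restriction $f|_{V_i}\colon V_i\to W_i$ with the projection $W_i\twoheadrightarrow W_i/W_{i-1}$ yields a linear surjection $V_i\twoheadrightarrow W_i/W_{i-1}$ whose kernel contains $V_{i-1}$ (because $f(V_{i-1})=W_{i-1}$). This induces a surjection $V_i/V_{i-1}\twoheadrightarrow W_i/W_{i-1}$, giving the desired dimension inequality. Taking $\log$, dividing by $i$, and passing to $\limsup$ yields $\h(B,\G)\le \h(A,\F)$. (If $B$ is finite-dimensional both sides present no issue; if some quotient $W_i/W_{i-1}$ vanishes while $V_i/V_{i-1}$ does not, the inequality still reads $-\infty\le\text{(something)}$, which is fine.)

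For the specific assertion $\h(L_K(E))\le \h(K\hat E)$, it only remains to check that the canonical epimorphism $p\colon K\hat E\to L_K(E)$ sends the standard filtration $\{V_i\}$ of $K\hat E$ onto the standard filtration $\{W_i\}$ of $L_K(E)$, i.e.\ $p(V_i)=W_i$. I would do this by induction on $i$: the base cases $i=0,1$ are immediate from the definitions since $V_0,V_1$ are spanned by vertices and (ghost) edges, which map to their counterparts generating $W_0,W_1$. For the inductive step, any path in $\hat E$ of length $\le k$ is a product of $\le k$ generators from $E^1\cup(E^1)^*$, so its image in $L_K(E)$ can be rewritten using (CK1) into a (possibly zero) sum of monomials $\lambda\mu^*$ with $l(\lambda)+l(\mu)\le k$, yielding $p(V_k)\subset W_k$; conversely, every generator $\lambda\mu^*$ of $W_k$ with $l(\lambda)+l(\mu)\le k$ is visibly the image of the corresponding path in $\hat E$ of length $\le k$, giving $W_k\subset p(V_k)$. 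Applying the abstract lemma to $p$ then gives (\ref{acot}).

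The only mildly subtle point is this last verification $p(V_i)=W_i$; everything else is a direct unwinding of definitions. The abstract inequality is really just the observation that quotients by filtrations commute with surjections in the trivial direction.
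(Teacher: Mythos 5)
Your argument is correct and follows essentially the same route as the paper: the paper likewise defines $\G$ by $W_i=f(V_i)$, notes that ``epimorphisms contract dimensions'' to get $\dim(f(V_i)/f(V_{i-1}))\le\dim(V_i/V_{i-1})$, and observes that the canonical epimorphism $p\colon K\hat E\to L_K(E)$ satisfies $p(V_i)=W_i$ by induction on $i$. You have simply filled in the details (the induced surjection $V_i/V_{i-1}\twoheadrightarrow W_i/W_{i-1}$ and the CK1 rewriting in the inductive step) that the paper leaves implicit.
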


In order to relate the algebraic entropy of $KE$ with that of $L_K(E)$ we use the canonical monomorphism $j\colon KE\to L_K(E)$ mapping any vertex to itself (its image in the Leavitt path algebra) and the same with any edge. We identify $KE$ with its image in $L_K(E)$.
Next we prove that $W_i \cap KE= V_i$, by induction on $i$. The equality is clear for $i=0$. Assume that $W_i \cap KE=V_i$ and let us prove $W_{i+1} \cap KE=V_{i+1}$. Take $\lambda \mu^* \in (W_{i+1}\setminus W_i) \cap KE$ such that $l(\lambda) + l(\mu)=i+1$. Since $\lambda \mu^* \in KE$, $\lambda=f_1\ldots f_n$ and $\mu=g_1 \ldots g_k$, that is, $\lambda \mu^*=f_1 \ldots f_{n-k}$ implying that $\lambda \mu^* \in V_{i+1}$ (observe that by hypothesis $n+k=i+1$ so then $n-k \le i+1$). The other containment is straightforward.

\subsection{Monomorphisms and entropy}
Now, consider a subalgebra $B$ of an algebra $A$ and the inclusion monomorphism $i \colon B \to A$. If $\F=\{V_n\}_{n \geq 0}$ is a filtration on $A$, then we can construct a filtration $\G = \{W_n\}_{n \ge 0}$ on $B$ given by $W_n = V_n \cap B$. 
Then, the inclusion $i\colon B\to A$ is a monomorphism in the category of filtered algebras.  
Since we have that,
$$\dim \left ( W_i/W_{i-1} \right )\le \dim \left ( V_i/V_{i-1} \right ),$$
the following lemma holds. 
\begin{lemma}\label{cota_monomorfismo} For the monomorphism of filtered algebras
 $i \colon (B,\mathcal{G}) \to (A,\F)$ as above, we have that $\h(B) \le \h(A)$.

Thus, for a finite graph $E$ we have the bound
\begin{equation} \label{actot2}
    \h(KE)\le\h(L_K(E)).
\end{equation}
In particular, one obtains together with the previous observations
\begin{equation}\label{eqone}
\h(KE)\le \h(L_K(E))\le \h(K\hat E).\end{equation}
\end{lemma}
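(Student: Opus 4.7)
The plan is to verify the displayed dimension inequality $\dim(W_i/W_{i-1})\le\dim(V_i/V_{i-1})$ stated just before the lemma, pass to the $\limsup$ of logarithms divided by $i$, and then feed this into the specific case of the canonical monomorphism $j\colon KE\to L_K(E)$ that was prepared in the paragraph above. The main ingredient is thus just to see that the inclusion of $W_i$ into $V_i$ descends to an injection on successive quotients: since $W_{i-1}=V_{i-1}\cap B\subseteq V_i\cap B=W_i$, there is a natural map $W_i/W_{i-1}\to V_i/V_{i-1}$, and it is injective because
\[
W_i\cap V_{i-1}=(V_i\cap B)\cap V_{i-1}=V_{i-1}\cap B=W_{i-1}.
\]
Hence $\dim(W_i/W_{i-1})\le\dim(V_i/V_{i-1})$.

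Next, assuming neither $A$ nor $B$ is finite-dimensional (the finite-dimensional cases giving $\h=0$ automatically), I would take $\log$ of the dimension inequality, divide by $i$, and take $\limsup_{i\to\infty}$ to obtain
\[
\h(B,\G)=\limsup_{i\to\infty}\frac{\log\dim(W_i/W_{i-1})}{i}\le\limsup_{i\to\infty}\frac{\log\dim(V_i/V_{i-1})}{i}=\h(A,\F),
\]
which is the first claim. The only thing to keep in mind is the convention of Remark~\ref{huevosconpatatas}-style comments about $-\infty$ entries; these cause no issue because they only lower the $\limsup$.

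To deduce the inequality (\ref{actot2}), I apply the general bound to the canonical inclusion $j\colon KE\hookrightarrow L_K(E)$, identifying $KE$ with its image. The observation made just above the statement shows that $W_i\cap KE=V_i$ for every $i$, so that the filtration on $KE$ induced from the standard filtration of $L_K(E)$ agrees with its own standard filtration. Therefore the general monomorphism bound yields $\h(KE)\le\h(L_K(E))$.

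Finally, the chain (\ref{eqone}) is obtained by juxtaposing (\ref{actot2}) with the earlier epimorphism bound (\ref{acot}), namely $\h(L_K(E))\le\h(K\hat E)$ coming from the canonical surjection $p\colon K\hat E\to L_K(E)$ (which satisfies $p(V_i)=W_i$). There is no real obstacle here: the only substantive content is the identity $W_i\cap KE=V_i$ already established in the text, which ensures that the filtrations align correctly so that the abstract monomorphism inequality can be applied.
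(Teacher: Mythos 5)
Your proposal is correct and follows essentially the same route as the paper: the dimension inequality on successive quotients for the induced filtration $W_n=V_n\cap B$, passage to the $\limsup$, the identity $W_i\cap KE=V_i$ for the canonical inclusion $j\colon KE\to L_K(E)$, and concatenation with the epimorphism bound $\h(L_K(E))\le\h(K\hat E)$. You in fact supply slightly more detail than the paper does (the explicit check that $W_i\cap V_{i-1}=W_{i-1}$, which makes the map on quotients injective), and this is fine.
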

\begin{proof}
  To prove this, we just need to consider Equations \eqref{acot} and \eqref{actot2}.
\end{proof}

\begin{remark}\rm
By the definition of the entropy and the Gelfand-Kirillov dimension, it is easy to see that for a directed graph $E$,
$$
\h (K(E)) \leq  \mathrm{GKdim} (K(E)),
$$
where both sides can be $\infty$ and the filtration considered is the standard one.

\end{remark}
\begin{remark}\rm  Let $A$ be finitely generated $K$-algebra with no unit and $A_1$ its unitization. If $A\ne A_1$, for any filtration $\{V_i\}_{i\ge 0}$ on $A$, we have an induced filtration $\{K\times V_i\}_{i\ge 0}$ on $A_1=K\times A$. Relative to these filtrations, it is easy to prove that $\h(A)=\h(A_1)$ by Lemma \ref{cota_monomorfismo}.
\end{remark}
Next, we will prove some bounds for algebraic entropy following its definition \eqref{churros}. In forthcoming sections, we will show that the computation of the algebraic entropy can be greatly relieved by using certain techniques using norms and spectral radius. But for now, we will be content to compute it from its very definition.

For instance, a universal bound for $\h(KE)$ when $E$ is finite is $\log(\vert E^1\vert)$:
 
\begin{proposition}
If $E$ is a finite directed graph with $\vert E^1\vert=n$,  then $\h(K E)\le\log(n).$
\end{proposition}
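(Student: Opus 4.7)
The plan is to bound the dimensions of the filtration quotients $V_k/V_{k-1}$ directly by counting paths in $E$, and then apply the definition of algebraic entropy.

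First, I would note that with the standard filtration on $KE$ (Definition \ref{churros}), a basis for $V_k$ consists of all vertices together with all paths of length $\le k$, since these are linearly independent in $KE$. Consequently, $V_k/V_{k-1}$ has a basis given by (cosets of) the paths of length exactly $k$, so $\dim(V_k/V_{k-1})$ equals the number of paths of length exactly $k$ in $E$.

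Next, I would bound the number of paths of length $k$. A path of length $k$ has the form $e_1 e_2 \cdots e_k$ with $e_i \in E^1$ and $r(e_i) = s(e_{i+1})$. Ignoring the composability conditions gives the trivial bound: the total number of such $k$-tuples is at most $|E^1|^k = n^k$. Hence
\[
\dim(V_k/V_{k-1}) \le n^k.
\]

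Finally I would plug this into the definition of algebraic entropy. Assuming $KE$ is infinite-dimensional (otherwise $\h(KE) = 0$ by definition, and the inequality holds trivially as long as $n \ge 1$; if $n = 0$ then $KE$ is necessarily finite-dimensional so the statement is vacuous),
\[
\h(KE) \;=\; \limsup_{k\to\infty}\frac{\log \dim(V_k/V_{k-1})}{k} \;\le\; \limsup_{k\to\infty}\frac{\log(n^k)}{k} \;=\; \log(n).
\]

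There is no real obstacle here; the proof amounts to observing that the standard filtration quotients are counted by paths of a fixed length, and that $|E^1|^k$ is a crude but sufficient upper bound. The only care needed is a brief remark about the finite-dimensional case (acyclic $E$, or $n=0$), where the bound is trivially satisfied because the entropy is defined to be zero.
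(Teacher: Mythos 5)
Your proof is correct and follows essentially the same route as the paper: identify $\dim(V_k/V_{k-1})$ with the number of paths of length exactly $k$, bound that crudely by $n^k$, and take the limsup. Your extra remarks on the finite-dimensional and $n\le 1$ edge cases are a small but welcome addition of care that the paper glosses over.
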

\begin{proof}
Assume that $\vert E^1\vert=n$ with $n > 1$. Consider the standard filtration as in Definition \ref{churros}.

So we have $\dim \left (V_1/V_0 \right )=n$ and since $V_2=V_1\oplus\span\{f_if_j\colon f_i,f_j\in E^1\}$, then $\dim \left (V_2/V_1 \right )\le n^2$. In general $\dim \left ( V_k/V_{k-1} \right )\le n^k$.
Consequently, $$\h(KE)\le \limsup_{k\to\infty}\frac{\log(n^k)}{k}=\log(n).$$
\end{proof}
\begin{corollary}
    If $E$ is a finite directed graph with $|E^1| = n$, then $\h(L_K(E)) \leq \log(2n)$.
\end{corollary}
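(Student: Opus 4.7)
The plan is to reduce this statement to the previous proposition applied to the extended graph $\hat{E}$, and then use the epimorphism bound already established in the paper.

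First, I would observe that since $E$ has $|E^1|=n$ edges, the extended graph $\hat{E}$ has exactly $2n$ edges, namely the $n$ original edges together with the $n$ ghost edges in $(E^1)^*$. The vertex set of $\hat{E}$ is the same as that of $E$, so $\hat{E}$ is still a finite directed graph, now with $|\hat{E}^1| = 2n$. Applying the previous proposition to $\hat{E}$ yields directly
\[
\h(K\hat{E}) \le \log(2n).
\]

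Next, I would invoke the epimorphism bound \eqref{acot} from the lemma preceding the monomorphism discussion, which states that $\h(L_K(E)) \le \h(K\hat{E})$ for any finite graph $E$. This inequality arises from the canonical surjection $p\colon K\hat{E}\to L_K(E)$, which is compatible with the standard filtrations since $p(V_i) = W_i$, so quotient dimensions cannot increase. Combining the two inequalities gives the desired bound $\h(L_K(E)) \le \log(2n)$.

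There is no real obstacle here; the work has already been done in the proposition for path algebras and in the epimorphism lemma. The only minor point worth spelling out, should one want a self-contained argument, is that $\hat{E}$ is finite whenever $E$ is, which is immediate because $|\hat{E}^0|=|E^0|$ and $|\hat{E}^1|=2|E^1|$ are both finite.
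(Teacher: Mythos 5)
Your proof is correct and follows exactly the route the paper intends: apply the bound $\h(KF)\le\log(|F^1|)$ to the extended graph $\hat{E}$, which has $2n$ edges, and then combine with the epimorphism inequality $\h(L_K(E))\le\h(K\hat{E})$ from \eqref{acot}. The paper leaves this corollary without an explicit proof, but the subsequent remark about $\log n\le\h(L_K(R_n))\le\log(2n)$ confirms this is precisely the argument envisioned.
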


Note that for the $n$-petal rose $R_n$, the entropy of $K R_n$ is precisely $\log n$ because the bound given for $\dim \left (V_k/V_{k-1} \right )$ in the proof above is really an equality, that is, 
$$\h(K R_n)=\log n.$$
Applying now formula \eqref{eqone} we have $\log n\le \h(L_K(R_n))\le \log(2n)$. 
Since the amplitude of the interval $[\log n,\log 2n]$ is $\log 2=0.69$ we have
a certain control on the entropy of the Leavitt path algebra $L_K(R_n)$. 

On the other hand, since $\displaystyle \lim_{n\to\infty}\frac{\log(2n)}{\log n}=1$, we claim that asymptotically the entropy of $KR_n$
and that of $L_K(R_n)$ agree, more precisely 
$$\lim_{n\to\infty} \frac{\h(L_K(R_n))}{\h(K R_n)}=1.$$
 
\subsection{Direct sums and entropy}
If we take two algebras $A$ and $B$ with filtrations $\{V_i\}_{i\geq 0}$ and $\{W_i\}_{i\geq 0}$ respectively, we can consider the algebra $A\oplus B$ and check that the set $\{V_i\oplus W_i\}_{i\geq 0}$ is a filtration. First of all, it is immediate that $\bigoplus_{i=0}^\infty V_i\oplus W_i = \left (\bigoplus_{i=0}^\infty V_i\right )\oplus \left (\bigoplus_{i=0}^\infty W_i\right ) = A \oplus B$. Secondly, we have, without any doubt, that $V_i\oplus W_i \subseteq V_{i+1}\oplus W_{i+1}$. Lastly, $A\cdot B = 0$, which implies $\left ( V_i\oplus W_i \right )\left (V_j\oplus W_j\right ) = V_iV_j\oplus W_iW_j \subseteq V_{i+j}\oplus W_{i+j}$.  
\begin{proposition}\label{directsums} Let $A,B$ be two algebras and $\F=\{V_i\}_{i\geq 0},\ \G=\{W_i\}_{i\geq 0}$ their respective filtrations. Consider $A \oplus B$ with the filtration $\mathcal{H}=\{V_i \oplus W_i\}_{i\geq 0}$, then
$$\h(A \oplus B)=\rm{max }\{\h(A),\h(B)\}.$$
\end{proposition}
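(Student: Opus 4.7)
The plan is to reduce the statement to the observation that direct sums split quotients, so that the relevant dimensions simply add, and then to apply the $\limsup$ identity for logarithms of sums that was already invoked in the proof of Proposition~\ref{huevosconpatatas}.

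First I would check that for every $i \geq 1$ there is a canonical isomorphism
$$(V_i \oplus W_i) / (V_{i-1} \oplus W_{i-1}) \;\cong\; V_i/V_{i-1} \;\oplus\; W_i/W_{i-1},$$
which is immediate from the fact that the direct sum of quotients is the quotient of direct sums. Taking dimensions gives
$$\dim\bigl((V_i \oplus W_i)/(V_{i-1} \oplus W_{i-1})\bigr) \;=\; \dim(V_i/V_{i-1}) + \dim(W_i/W_{i-1}).$$
Writing $a_i := \dim(V_i/V_{i-1})$ and $b_i := \dim(W_i/W_{i-1})$, the defining formula for the entropy (in the infinite-dimensional case) becomes
$$\h(A \oplus B, \mathcal{H}) \;=\; \limsup_{i \to \infty} \frac{\log(a_i + b_i)}{i}.$$

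Next, I would invoke the identity
$$\limsup_{n \to \infty} \frac{\log(a_n + b_n)}{n} \;=\; \max\left\{\limsup_{n \to \infty} \frac{\log a_n}{n},\; \limsup_{n \to \infty} \frac{\log b_n}{n}\right\}$$
recalled in the proof of Proposition~\ref{huevosconpatatas}. This immediately yields $\h(A \oplus B, \mathcal{H}) = \max\{\h(A, \F), \h(B, \G)\}$, as desired.

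The only delicate point, which I expect to be the main (minor) obstacle, is reconciling the convention that $\h(C) = 0$ whenever $C$ is finite-dimensional, with the $\limsup$ formula above. I would handle this by splitting into cases: if both $A$ and $B$ are finite-dimensional then $A \oplus B$ is finite-dimensional and all three entropies are $0$; if exactly one of them, say $A$, is finite-dimensional, then $a_i = 0$ for all sufficiently large $i$, so $\log(a_i + b_i) = \log b_i$ eventually, which gives $\h(A \oplus B, \mathcal{H}) = \h(B, \G) = \max\{0, \h(B, \G)\}$ since $\h(B, \G) \geq 0$ (the quotients $W_i/W_{i-1}$ have strictly positive dimension for infinitely many $i$ when $B$ is infinite-dimensional). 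The generic case where both $A$ and $B$ are infinite-dimensional is the one treated by the direct application of the identity above.
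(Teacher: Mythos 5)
Your proposal is correct and follows essentially the same route as the paper's own proof: the additivity of dimensions across the quotient of direct sums, followed by the $\limsup$ identity for $\log(a_n+b_n)/n$ already used in Proposition~\ref{huevosconpatatas}. Your extra case analysis for the finite-dimensional convention is a careful touch the paper omits (it states the identity only for strictly positive sequences), but it does not change the substance of the argument.
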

\begin{proof}
Define $H_i:=\{V_i \oplus W_i\}$ for $i\geq 0$. First observe that for every $k$, $$\dim \left (\frac{H_{k+1}}{H_k}\right )=\dim\left (\frac{V_{k+1}\oplus W_{k+1}}{V_k \oplus W_k} \right ) =\dim\left (\frac{V_{k+1}}{V_k} \right )+\dim \left (\frac{W_{k+1}}{W_k} \right ).$$ Secondly, again take into account that for two sequences $\{a_n\}_{n\geq 0}$ and $\{b_n\}_{n\geq 0}$ with $a_n,\ b_n>0$ the following equality holds: $$\limsup_{n \to \infty} \frac{\log(a_n +b_n)}{n}=\max \left \{\limsup_{n \to \infty} \frac{\log(a_n)}{n}, \limsup_{n \to \infty}\frac{\log(b_n)}{n}\right \}.$$ So we have that
\begin{equation*}
\begin{split}
     \h(A \oplus B) & =\limsup_{n \to \infty} \frac{1}{n}\log\left (\dim \left (\frac{H_{n+1}}{H_n}\right )\right)= \\
     & =\limsup_{n \to \infty}\frac{1}{n}\log\left (\dim\left (\frac{V_{n+1}}{V_n}\right )+\dim\left (\frac{W_{n+1}}{W_n} \right )\right )= \\ & = \max\left \{\limsup_{n \to \infty} \frac{1}{n}\log\left (\frac{V_{n+1}}{V_n} \right ),\limsup_{n \to \infty} \frac{1}{n}\log\left (\frac{W_{n+1}}{W_n} \right )\right\}= \\
     & = \max\{\h(A),\h(B)\}.
\end{split}
\end{equation*}
\end{proof}
\begin{corollary}\label{cereales}
Let $E$ be a finite directed graph. If $KE$ is the corresponding path algebra with 
\begin{equation*}
    KE = \bigoplus_{i=1}^m KE_i
\end{equation*}
with $E_i$ for $i = 1,\ldots,m$ the connected components of $E$, then 
\begin{equation*}
    \h(KE) = \max \left \{\h(KE_1), \h(KE_2),\ldots, \h(KE_m) \right \}.
\end{equation*}
Observe that this result is also true for its corresponding $L_K(E)$ and $K\hat{E}$.
\end{corollary}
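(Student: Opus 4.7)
The plan is to reduce Corollary~\ref{cereales} to a straightforward iteration of Proposition~\ref{directsums}. First I would verify the algebraic decomposition in a form compatible with the filtration. Since $E_1,\dots,E_m$ are the connected components of $E$, no edge of $E$ has its source and range in different components, so every path of $E$ lies entirely inside a single $E_i$. Letting $V_n$ denote the standard filtration space on $KE$ (spanned by paths of length $\le n$) and $V_n^{(i)}$ the analogous space in $KE_i$, this yields the internal direct sum decomposition $V_n=\bigoplus_{i=1}^m V_n^{(i)}$, so the standard filtration on $KE$ is precisely the direct sum filtration built from the standard filtrations on the $KE_i$ as in Proposition~\ref{directsums}.

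Second, I would induct on the number $m$ of connected components. The base case $m=2$ is exactly Proposition~\ref{directsums} applied to $(KE_1,\F_1)$ and $(KE_2,\F_2)$. For the inductive step, I would group $KE=KE_1\oplus B$ where $B:=\bigoplus_{i=2}^m KE_i$ with its associated direct sum filtration, apply Proposition~\ref{directsums} to get $\h(KE)=\max\{\h(KE_1),\h(B)\}$, and then apply the induction hypothesis to $B$, producing
\begin{equation*}
\h(KE)=\max\{\h(KE_1),\h(KE_2),\dots,\h(KE_m)\}.
\end{equation*}

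Third, for the Leavitt path algebra and the path algebra of the extended graph, the same argument applies once we observe that connected components are preserved by the constructions $E\mapsto \hat E$ and $E\mapsto L_K(E)$. Indeed, each ghost edge $e^*$ satisfies $s'(e^*)=r(e)$ and $r'(e^*)=s(e)$, so it lies in the same connected component of $\hat E$ as $e$; hence $\hat E$ decomposes as $\hat E_1\sqcup\cdots\sqcup \hat E_m$. The Cuntz--Krieger relations (CK1) and (CK2) involve only edges sharing a source or range, so they are internal to each component, giving $L_K(E)=\bigoplus_{i=1}^m L_K(E_i)$. The standard filtrations on $K\hat E$ and $L_K(E)$ decompose accordingly (any element $\lambda\mu^*$ with $\lambda,\mu\in\mathrm{Path}(E)$ requires $r(\lambda)=r(\mu)$, forcing $\lambda$ and $\mu$ into the same component), so the induction above applies verbatim.

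The only point that requires care is matching the standard filtration with the direct sum filtration of the components; once this compatibility is checked, the corollary follows immediately. I do not expect any genuine obstacle here, as the argument is entirely formal given Proposition~\ref{directsums}.
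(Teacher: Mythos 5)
Your proposal is correct and follows exactly the route the paper intends: the corollary is presented as an immediate consequence of Proposition~\ref{directsums}, obtained by iterating the two-summand case over the connected components, with the observation that the standard filtration on $KE$ (resp.\ $K\hat E$, $L_K(E)$) is the direct sum of the standard filtrations on the components. Your additional checks (paths stay within a component, ghost edges and the (CK) relations are internal to components) are the right details to verify and present no obstacle.
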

\subsection{Morita equivalence and entropy}
Let $(R,\F)$ be a filtered algebra with
$\F=\{V_i\}_{i\ge 0}$ and consider the algebra $M_n(R)$ which we identify with $M_n(K)\otimes R$. Then, we can define a filtration $W_i:=M_n(K)\otimes V_i$ of $M_n(R)$ such that the monomorphism
$j\colon R\to M_n(K)\otimes R$ satisfies 
$W_i\cap j(R)=V_i$. Consequently $\dim(W_k)=n^2\dim(V_k)$ and so, if we consider $M_n(R)$ endowed with the filtration $\{W_i\}$ we have
\begin{equation}\label{mor2}
\h(M_n(R))=\limsup_{k\to\infty}
\frac{\log[n^2(\dim(V_k)-\dim(V_{k-1}))]}{k}=
\end{equation}
\begin{equation*}
   =  \limsup_{k\to\infty}\frac{
\log[\dim(V_k)-\dim(V_{k-1})]}{k}=\h(R).
\end{equation*}

\begin{remark}\rm
Since the entropy of 
$K[x,x^{-1}]$ is zero,  then the entropy of $L_K(C_n)=0$ (being $C_n$ any cycle of 
length $n$), the reason of this  is that $L_K(C_n)\cong M_n(K[x,x^{-1}])$ \cite{AAS, AbramsPino}. In more detail, if the entropy of 
$K[x,x^{-1}]$ is zero, then the entropy of $M_n(K[x,x^{-1}])$ is zero and  we have an isomorphism $\varphi\colon M_n(K[x,x^{-1}]) \to L_K(C_n)$. So we know that $h_{alg}(M_n(K[x,x^{-1}]))=0$ for some filtration $\{V_n\}_{n\ge 1}$ and $\F=\{\varphi(V_n)\}_{n\ge 1}$ is a filtration of $L_K(C_n)$ with  $\h(L_K(C_n),\F)=0$, then by item (\ref{mandarina}) in Proposition \ref{sopadecebolla} for the standard filtration we have $\h(L_K(C_n))=0$. 
Observe that  standard filtrations are all of them in the hypothesis of item (\ref{mandarina}), that to say, if $\F=\{V_n\}$ is a standard filtration, then $\F$ verifies  that $V_1$ is finite dimensional and $V_k=(V_1)^k$ (for any $k$).
This also proves that 
the entropy of multi-headed comets is zero by the results of Hazrat, see Corollary 3.12 in \cite{Hazratsebandalvilela}. Also we can aboard the task of computing the entropy of
polycephaly graphs (see \cite{Hazrat}).
\end{remark}

Now, recall that two unital rings $R$ and $S$ are Morita equivalent 
if and only if there is a  full idempotent $e$ in 
the matrix ring $M_n(R)$ such that  
 $S\cong e M_n(R)e$ for some positive integer $n$, see e.g.~\cite{Lam}.
\begin{proposition}\label{full}
Assume that $(R,\F)$ is a filtered (unital) $K$-algebra (finitely generated) with $\F=\{V_i\}_{i\ge 1}$. Let $S:=eRe$ for some full
idempotent $e\in R$. Then, relative to the filtration $\G=\{eV_ie\}_{i\ge 1}$ on $S$ we have $\h(S)\le\h(R)$.
\end{proposition}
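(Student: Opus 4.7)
The plan is to relate $\dim(eV_ie)$ to $\dim(V_i)$ via the $K$-linear map $\pi\colon R\to S$, $r\mapsto ere$. This map is surjective (since $s=ese=\pi(s)$ for any $s\in S$), and its restriction to $V_i$ has image exactly $eV_ie$. Hence $\dim(eV_ie)\leq\dim(V_i)$ for every $i$, and since $eV_{i-1}e\subseteq eV_ie$,
\[\dim(eV_ie/eV_{i-1}e)\leq\dim(eV_ie)\leq\dim(V_i).\]
Note that $\pi$ is not multiplicative, so I am not pretending to produce a morphism of filtered algebras; the linear surjection is enough for the dimension bookkeeping.

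Next, I would bridge these raw dimension bounds to an entropy inequality through the auxiliary identity
\[\limsup_{i\to\infty}\frac{\log\dim V_i}{i}\;=\;\h(R,\F).\]
The $\geq$ direction is immediate from $\dim V_i\geq\dim(V_i/V_{i-1})$. For $\leq$, given $\varepsilon>0$ pick $N$ with $\dim(V_j/V_{j-1})\leq e^{(\h(R,\F)+\varepsilon)j}$ for $j\geq N$, telescope $\dim V_i=\sum_{j=1}^i\dim(V_j/V_{j-1})$ to obtain $\dim V_i\leq A+Be^{(\h(R,\F)+\varepsilon)i}$ for suitable constants $A,B$, take logarithms and divide by $i$, then let $\varepsilon\to 0$. (If $\h(R,\F)=\infty$ there is nothing to prove; if $R$ is finite-dimensional, then so is $S=eRe$, and both entropies are $0$ by convention.)

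Combining the two ingredients,
\[\h(S,\G)=\limsup_{i\to\infty}\frac{\log\dim(eV_ie/eV_{i-1}e)}{i}\leq\limsup_{i\to\infty}\frac{\log\dim V_i}{i}=\h(R,\F),\]
which is the desired inequality. The main (mild) obstacle is the geometric-sum bookkeeping for the auxiliary identity; everything else is essentially a one-line chain of inequalities once that lemma is in hand. I would also remark that fullness of $e$ is not actually used in this one-sided bound—it enters only when one wants to upgrade the inequality to the equality $\h(S,\G)=\h(R,\F)$ using Morita equivalence, which is the natural follow-up and would combine this proposition with the inequality in the opposite direction obtained by viewing $R$ as a corner of $M_n(S)$ together with the already established relation $\h(M_n(S))=\h(S)$ in~\eqref{mor2}.
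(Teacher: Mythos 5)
Your argument is correct, but it takes a genuinely different route from the paper's. The paper treats $S=eRe$ as a subalgebra of $R$ via the inclusion $j\colon S\to R$, asserts that the induced filtration satisfies $V_i\cap S=eV_ie$, and then invokes its monomorphism lemma (Lemma \ref{cota_monomorfismo}) to conclude $\h(S)\le\h(R)$ in one line. You instead use the \emph{linear} (non-multiplicative) surjection $r\mapsto ere$ to get the raw bound $\dim(eV_ie)\le\dim(V_i)$, and then bridge to entropy via the auxiliary identity $\limsup_i \frac{\log\dim V_i}{i}=\h(R,\F)$, proved by telescoping and a geometric-series estimate. The trade-off: the paper's proof is shorter but leans on the identification $V_i\cap eRe=eV_ie$, whose inclusion $eV_ie\subseteq V_i$ is not automatic for an arbitrary idempotent (it needs $e$ to sit low enough in the filtration, as it does in the intended application to corners of $M_n(R)$ with the filtration $M_n(K)\otimes V_i$); your proof sidesteps that entirely and compares quotient dimensions against the cumulative dimensions $\dim V_i$, at the cost of establishing the (standard, and correctly handled) auxiliary limsup identity, including the edge cases $\h(R,\F)=\infty$ and $R$ finite dimensional. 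Your closing observation that fullness of $e$ is not used in this one-sided inequality, and only enters when upgrading to equality via Morita equivalence together with $\h(M_n(S))=\h(S)$, matches exactly how the paper deploys this proposition in Theorem \ref{theorem_entropy_morita}.
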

\begin{proof} For the inclusion $j\colon S\to R$ we have $V_i\cap S=W_i$. Applying Lemma \ref{cota_monomorfismo} we get $\h(S)\le\h(R)$.
\end{proof}
\begin{theorem}\label{theorem_entropy_morita}
Assume that $A$ and $B$ are Morita equivalent unital $K$-algebras and finitely generated. If $\F$ is a filtration of $A$, there is a filtration on $B$ such that relative to these, we have $\h(A)=\h(B)$. 
\end{theorem}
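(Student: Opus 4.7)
The plan is to combine the standard characterization of Morita equivalence via full idempotents with Proposition~\ref{full} and Equation~\eqref{mor2}, bridging the gap between the resulting bounds on cumulative dimensions and the incremental dimensions that define the entropy.

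First, since $A$ and $B$ are Morita equivalent unital $K$-algebras, there exist $n \in \mathbb{N}$ and a full idempotent $e \in M_n(A)$ with $B \cong eM_n(A)e$; I identify $B$ with this corner. If $A$ is finite-dimensional then so is $B$, and both entropies vanish by definition, so I may assume both are infinite-dimensional. Given $\F = \{V_i\}$ on $A$, lift it to $M_n(A) \cong M_n(K) \otimes A$ via $W_i := M_n(K) \otimes V_i$; Equation~\eqref{mor2} gives $\h(M_n(A), \{W_i\}) = \h(A, \F)$. Define $\G := \{eW_ie\}$, which is indeed a filtration of $B$ since $\bigcup_i eW_ie = eM_n(A)e = B$ and $(eW_ie)(eW_je) \subseteq eW_{i+j}e$. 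Proposition~\ref{full} then yields
$$\h(B, \G) \;\le\; \h(M_n(A), \{W_i\}) \;=\; \h(A, \F).$$

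For the reverse inequality I exploit the fullness of $e$. Writing $1_{M_n(A)} = \sum_{k=1}^{r} a_k e b_k$ with $a_k, b_k \in M_n(A)$, I choose $N$ large enough so that every $a_k, b_k \in W_N$. Then for any $x \in W_i$,
$$x \;=\; \sum_{k,\ell = 1}^{r} a_k \cdot (e b_k x a_\ell e) \cdot b_\ell,$$
and each factor $e b_k x a_\ell e$ lies in $eW_{i+2N}e$. Hence the $K$-linear map
$$\operatorname{span}\{a_k\} \otimes eW_{i+2N}e \otimes \operatorname{span}\{b_\ell\} \longrightarrow W_i, \qquad a \otimes y \otimes b \mapsto ayb,$$
is surjective, giving the dimension bound $\dim W_i \le r^2 \cdot \dim(eW_{i+2N}e)$.

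The main obstacle is converting this control of \emph{cumulative} dimensions into control of the \emph{incremental} dimensions $\dim(W_i/W_{i-1})$ appearing in the entropy. This is handled by the elementary limsup identity that for any non-decreasing positive sequence $\{a_i\}$ tending to infinity,
$$\limsup_{i \to \infty} \frac{\log a_i}{i} \;=\; \limsup_{i \to \infty} \frac{\log(a_i - a_{i-1})}{i};$$
the inequality $\ge$ is immediate, while $\le$ follows by summing differences (exactly as in the proof of Proposition~\ref{sopadecebolla}: if $\Delta a_i < e^{(\lambda + \varepsilon)i}$ eventually then $a_i = O(e^{(\lambda + \varepsilon)i})$). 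Applied to the sequences $\{\dim W_i\}$ and $\{\dim eW_ie\}$, and noting that the shift by $2N$ is inconsequential in the limsup, the displayed dimension bound translates into
$$\h(A, \F) \;=\; \limsup_i \frac{\log \dim W_i}{i} \;\le\; \limsup_i \frac{\log \dim eW_{i+2N}e}{i} \;=\; \h(B, \G),$$
which together with the first inequality yields the desired equality $\h(A, \F) = \h(B, \G)$.
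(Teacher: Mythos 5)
Your proof is correct, and for the crucial reverse inequality it takes a genuinely different route from the paper. The two arguments share the first half: realize one algebra as a full corner of a matrix algebra over the other, transport the filtration via $W_i=M_n(K)\otimes V_i$, and combine Proposition~\ref{full} with Formula~\eqref{mor2} to obtain $\h(B,\G)\le\h(A,\F)$ for $\G=\{eW_ie\}$. For the opposite inequality the paper simply appeals to ``the symmetry of the Morita equivalence property''; this is delicate, since running the corner construction from $B$ back to $A$ a priori produces a different pair of filtrations, so it is not immediate that both inequalities hold for one and the same pair. You instead keep the explicit filtration $\G$ and prove $\h(A,\F)\le\h(B,\G)$ directly from fullness: writing $1=\sum_k a_keb_k$ with $a_k,b_k\in W_N$ gives $\dim W_i\le r^2\dim(eW_{i+2N}e)$, and the identity $\limsup_i i^{-1}\log a_i=\limsup_i i^{-1}\log\Delta a_i$ (valid for nondecreasing integer sequences tending to infinity, exactly as in Proposition~\ref{sopadecebolla}) converts this cumulative bound into the incremental one defining the entropy. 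This costs you an extra lemma but buys a self-contained equality for an explicitly named filtration on $B$, which is what the statement actually asserts. Two cosmetic remarks: the trilinear map you write down has image \emph{containing} (not equal to) $W_i$, which is all the dimension count needs; and, like the paper's Proposition~\ref{full}, you take for granted that $\{eW_ie\}$ satisfies the multiplicativity axiom of a filtration, which implicitly assumes $e$ interacts well with $\{W_i\}$ --- an issue inherited from the paper rather than introduced by you, and one that does not affect the dimension estimates on which the entropy depends.
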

\begin{proof}
It suffices to prove that $\h(A)\le \h(B)$ by the symmetry of the Morita 
equivalence property. But we know that $A\cong eM_n(B)e$ for a full 
idempotent $e$ of
$M_n(B)$. Then, Proposition \ref{full} implies that $\h(eM_n(B)e)\le\h(M_n(B))$
and Formula \eqref{mor2} gives $\h(M_n(B))=\h(B)$. Thus we have $\h(A)\le\h(B)$ 
and by symmetry $\h(A)=\h(B)$.  
\end{proof}

\begin{remark}\rm
For row-finite graph $E$ and a loopless nonsink $v \in E^0$, replace each path $fg$ of length
$2$ such that $r(f) = v = s(g)$ with an edge labeled $fg $ from $s(f)$ to $r(g)$ and delete $v$
and all edges touching $v$. This is the so-called reduction algorithm which is a Leavitt path algebra Morita invariant  (see \cite{KocOzaydin2018}). Hence, by Theorem \ref{theorem_entropy_morita}, graphs obtained by reduction has Leavitt path algebras with equal entropy. However, this might not be the standard filtrations we are considering in this paper. This could be seen in Example \ref{morita_example}. Nevertheless for a source, we shall see in Corollary \ref{jamon} that the entropy using the standard filtrations is preserved. 
\end{remark}

\section{Entropy for path and Leavitt path algebras of finite graphs.}

\begin{definition}\rm Let $E$ be a finite directed graph with $n$ vertices. The \emph{adjacency matrix} of $E$ denoted by $A_E:=(a_{i,j})_{n \times n}$  where $a_{i,j}= \vert \{e \in E^1 : s(e)=v_i, r(e)=v_j\}\vert$.
\end{definition}

Consider the square matrix $A = (a_{i,j})_{1\leq i,j\leq m}$  and the matrix norm $\|A\|_{1,1}:=\sum_{i,j= 1}^m \vert a_{i,j} \vert$. This is a submultiplicative norm. Furthermore we have:

\begin{proposition}\label{tostadas} Let $A_E$ be the adjacency matrix associated to a finite directed graph $E$. Then $$\h(KE)=\displaystyle \limsup_{n\to\infty}\frac{\log(\| A_E^n \|_{1,1})}{n}.$$
\end{proposition}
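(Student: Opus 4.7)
The plan is to identify $\dim(V_n/V_{n-1})$ with the matrix quantity $\|A_E^n\|_{1,1}$ and then read off the equality from the definition of entropy. The first step is the standard combinatorial fact that the $(i,j)$-entry of $A_E^n$ counts the number of paths of length $n$ in $E$ from $v_i$ to $v_j$: this follows by induction on $n$ using that $(A_E^{n+1})_{i,j} = \sum_k (A_E^n)_{i,k} (A_E)_{k,j}$, which matches the decomposition of a path of length $n+1$ from $v_i$ to $v_j$ as a path of length $n$ from $v_i$ to some intermediate $v_k$ followed by a single edge from $v_k$ to $v_j$. Since every entry of $A_E^n$ is a non-negative integer, $\|A_E^n\|_{1,1} = \sum_{i,j}(A_E^n)_{i,j}$ is exactly the total number of paths of length $n$ in $E$.

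Next I would connect this count with the standard filtration. Because the set of all paths of $E$ (including vertices as paths of length $0$) is a $K$-basis of $KE$, and $V_n$ is defined to be the linear span of the paths of length $\le n$, the set of paths of length exactly $n$ descends to a basis of the quotient $V_n/V_{n-1}$. Hence $\dim(V_n/V_{n-1})$ coincides with the number of paths of length $n$, which by the previous step equals $\|A_E^n\|_{1,1}$.

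Finally, one considers two cases. If $E$ has no cycles, then there are only finitely many paths, so $KE$ is finite dimensional, $\|A_E^n\|_{1,1} = 0$ for all sufficiently large $n$, and both sides equal $0$ (using the convention $\h = 0$ on the left, and $\log 0 = -\infty$ together with the $\limsup$ convention giving $-\infty$, or one handles this trivial case separately). If $E$ has a cycle, then $\|A_E^n\|_{1,1} \ge 1$ for all $n \ge 1$, so $\log \|A_E^n\|_{1,1}$ is defined and non-negative, and the definition of algebraic entropy gives
\begin{equation*}
\h(KE) = \limsup_{n\to\infty} \frac{\log \dim(V_n/V_{n-1})}{n} = \limsup_{n\to\infty} \frac{\log \|A_E^n\|_{1,1}}{n}.
\end{equation*}

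The only potential obstacle is a careful bookkeeping of the boundary case (acyclic graphs, where $V_n$ stabilizes), so that the formula is stated and interpreted consistently with the piecewise definition of $\h$ given earlier; apart from that, the proposition is essentially a reformulation of the counting identity $\dim(V_n/V_{n-1}) = \|A_E^n\|_{1,1}$.
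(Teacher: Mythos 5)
Your proposal is correct and follows essentially the same route as the paper: identify the entries of $A_E^n$ with counts of paths of length $n$, sum them to get $\|A_E^n\|_{1,1} = \dim(V_n/V_{n-1})$ via the path basis of $KE$, and then apply the definition of the entropy. The only difference is that you spell out the induction and the acyclic boundary case, which the paper leaves implicit.
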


\begin{proof}
If $A_E^n=(a_{i,j})$, then $a_{i,j}$ is the number of paths of length $n$ from the vertex $v_i$ to the vertex $v_j$ in the graph.
This implies that $\|A_E^n\|_{1,1} = \sum_{i,j = 1}^m  a_{i,j}  = \vert \{ \mu \in \hbox{Path}(E) \colon l(\mu) = n\} \vert  = \dim \left ( V_{n}/V_{n-1}\right ) $ with $\{V_{i}\}_{i\geq 0}$ the standard filtration of $KE$.
\end{proof}
\begin{example}\rm  Let $E$ be the following graph:

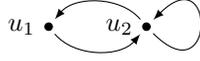
\begin{figure}[ht]
    \begin{center}
         \begin{tikzpicture}[scale = 0.65, shorten <=2pt,shorten >=2pt,>=latex, node distance={5mm},sub/.style = {draw, fill, circle, inner sep = 1pt}, main/.style = {draw, fill, circle, inner sep = 1pt}, sub/.style = {draw = red, fill = red, circle, inner sep = 1pt}]
    \node[main,label = left:$u_1$] (1) at (3,0) {};
    \node[main,label = left:$u_2$] (2) at (5,0) {};
     
    \draw[->] (1) to [bend right = 50] (2);
    \draw[->] (2) to [bend right = 50] (1);
    \draw[->] (2) to [out = 315, in = 45, looseness = 40] (2);
    \end{tikzpicture}
    \caption{Directed graph $E$}
    \label{fig_grafo_fibonacci}
    \end{center}
\end{figure}

The corresponding adjacency matrix is given by:
$$A_E= \begin{pmatrix}
0 & 1\\
1 & 1\\
\end{pmatrix}$$
It is easy to prove by induction that 
$$A_E^n= \begin{pmatrix}
f_{n-1} & f_n\\
f_n & f_{n+1}\\
\end{pmatrix},$$ where $f_0=0$, $f_1=1$, $f_2=1$, and $f_n=f_{n-1}+f_{n-2}$ for $n \geq 3$. Denote $e_+=\frac{1+\sqrt{5}}{2}$ and $e_-=\frac{1-\sqrt{5}}{2}$. Diagonalizing we have that $A_E^n=PD^nP^{-1}$ where $D=\text{diag}(e_+,e_-)$ and $P=\begin{pmatrix}
-e_- & -e_+ \\
1 & 1 \\
\end{pmatrix}$, that is, $$A_E^n=\frac{1}{\sqrt{5}}\begin{pmatrix}
e_+^{n-1}-e_-^{n-1} & e_+^n-e_-^n \\
e_+^n-e_-^n  & e_+^{n+1}-e_-^{n+1} \\
\end{pmatrix}.$$
So finally $\| A_E^n \|=\frac{1}{\sqrt{5}}(e_+^n-e_-^n  + e_+^{n+1}-e_-^{n+1})$. In order to compute $\h(KE)=\limsup_{n\to\infty}\frac{\log(\| A_E^n \|)}{n}$ we apply Stolz criterion, giving that $$\h(KE)= \lim_{n\to \infty}\log\left(\frac{e_+^{n+1}-e_-^{n+1}  + e_+^{n}-e_-^{n}}{e_+^{n}-e_-^{n}  + e_+^{n-1}-e_-^{n-1}}\right)=\log(e_+).$$
\end{example}
Taking into account that the spectral radius of a square matrix is the maximum of the modules of its eigenvalues, see \cite[18.8 Definition, p. 355]{rudin1970real}
and the formula for the spectral radius in a Banach algebra \cite[18.9 Theorem, p. 355]{rudin1970real} 
we have the following consequence:

\begin{theorem}\label{mantecao}
Let $E$ be a finite directed graph and $A_E$ its adjacency matrix. If $KE$ is the corresponding path algebra we have that the entropy of $KE$ is finite and
\begin{equation*}
    \h(KE) = \log (\rho (A_E))
\end{equation*}
with $\rho (A_E)$ the spectral radius. 
In particular, $$\h(KE)=\limsup_{n\to\infty}\frac{
\log\dim(V_n/V_{n-1})}{n}=\lim_{n\to\infty}\frac{
\log\dim(V_n/V_{n-1})}{n}$$ for  the standard filtration $\{V_n\}$. 
\end{theorem}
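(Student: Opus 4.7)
The approach is a direct application of Gelfand's spectral radius formula to the identification established in Proposition \ref{tostadas}. That proposition already gives
$$\h(KE) = \limsup_{n\to\infty}\frac{\log(\|A_E^n\|_{1,1})}{n},$$
via the combinatorial identity $\|A_E^n\|_{1,1} = \dim(V_n/V_{n-1})$, which counts paths of length exactly $n$ in $E$.

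The key step is to invoke Gelfand's formula in the finite-dimensional Banach algebra $(M_m(K), \|\cdot\|_{1,1})$, which the theorem statement has already cited as \cite[18.9 Theorem, p.~355]{rudin1970real}: for any matrix $A$ and any submultiplicative norm,
$$\rho(A) = \lim_{n\to\infty} \|A^n\|_{1,1}^{1/n}.$$
Crucially, this is a genuine limit, not merely a limsup. Taking logarithms and dividing by $n$ yields
$$\log \rho(A_E) = \lim_{n\to\infty}\frac{\log \|A_E^n\|_{1,1}}{n}.$$
Combining this with the first display, the limsup defining $\h(KE)$ is in fact a limit, and both are equal to $\log \rho(A_E)$; finiteness is automatic since $\rho(A_E) \le \|A_E\|_{1,1} = |E^1| < \infty$. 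The second assertion of the theorem, concerning the standard filtration, is then the same identity rewritten using $\dim(V_n/V_{n-1}) = \|A_E^n\|_{1,1}$.

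Essentially nothing in this argument is hard once the two prerequisites are in place. The mild verification needed is only that $\|\cdot\|_{1,1}$ is submultiplicative, so that $(M_m(K), \|\cdot\|_{1,1})$ is a Banach algebra and Gelfand's theorem applies; this is already stated in the paragraph preceding the theorem. The main conceptual content is that Gelfand's formula simultaneously evaluates the limit and upgrades the limsup in the definition of algebraic entropy to an honest limit, which is what makes the spectral-radius reformulation useful for computations such as those in the preceding example.
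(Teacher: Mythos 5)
Your proposal is correct and follows essentially the same route as the paper: the paper's proof likewise combines Proposition \ref{tostadas} with Gelfand's formula $\lim_{n\to\infty}\|A^n\|^{1/n}=\rho(A)$ for the submultiplicative norm $\|\cdot\|_{1,1}$. Your additional remarks on finiteness and on the limsup being an honest limit only make explicit what the paper leaves implicit.
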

\begin{proof}
Since $\lim_{n\to \infty} \|A^n\|^{1/n} = \rho(A)$ for every square matrix $A$ and every matrix norm. Following Proposition \ref{tostadas}
\begin{equation*}
    \h(KE) = \displaystyle \limsup_{n\to\infty}\frac{\log(\| A_E^n \|_{1,1})}{n} = \displaystyle \limsup_{n\to\infty}\log(\left (\| A_E^n \|_{1,1}\right )^{1/n}).
\end{equation*}
\end{proof}

Consider $E = (E^0,E^1,s,r)$ a directed graph and $v \in E^0$. We denote by $E\setminus v$  the graph $F = (F^0,F^1,s,r)$ with $F^0 = E^0\setminus \{v\}$ and $F^1 = E^1 \setminus (s^{-1}(v) \cup r^{-1}(v))$. Furthermore, we can define the graph $E\setminus X$ for $X \subseteq E^0$ in the natural way. This is a generalization of the source elimination (see \cite{AAS}), also known as Move(S), see \cite{Hazratgoncalvez}.

\begin{corollary}\label{jamon}
    Let us consider $E$ a finite directed graph and $X \subseteq {\rm Sink}(E) \cup {\rm Source}(E)$. Then 
    \begin{equation*}
        \h(KE) = \h(KE \setminus X).
    \end{equation*}
\end{corollary}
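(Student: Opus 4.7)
My plan is to reduce to the case of removing a single vertex and then exploit the block triangular structure of the adjacency matrix, invoking Theorem \ref{mantecao} to pass from spectral data back to entropy.

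First I would argue by induction on $|X|$ that it suffices to treat $X=\{v\}$ with $v$ a single sink or source of $E$. The point is that if $v\in X$ is a sink of $E$, then every other vertex $w\in X$ is still a sink or source of $E\setminus v$: removing $v$ only removes edges, so no new edges start or end at $w$, and the sink/source property is preserved. Thus one can peel off the vertices of $X$ one at a time.

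Next, for the single-vertex case, order the vertices of $E$ so that $v$ comes last. If $v$ is a sink, then $s^{-1}(v)=\emptyset$, so the last row of $A_E$ is zero, giving the block form
\begin{equation*}
A_E=\begin{pmatrix} A_{E\setminus v} & c \\ 0 & 0 \end{pmatrix},
\end{equation*}
while if $v$ is a source, then $r^{-1}(v)=\emptyset$ and the last column of $A_E$ is zero, giving
\begin{equation*}
A_E=\begin{pmatrix} A_{E\setminus v} & 0 \\ r^{T} & 0 \end{pmatrix}.
\end{equation*}
In either case, expanding along the last row or column yields $\det(\lambda I - A_E)=\lambda\cdot \det(\lambda I - A_{E\setminus v})$, so the nonzero eigenvalues of $A_E$ and $A_{E\setminus v}$ coincide. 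Consequently $\rho(A_E)=\rho(A_{E\setminus v})$.

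Finally, applying Theorem \ref{mantecao} to both sides gives
\begin{equation*}
\h(KE)=\log\rho(A_E)=\log\rho(A_{E\setminus v})=\h(K(E\setminus v)),
\end{equation*}
and iterating this identity over the vertices of $X$ completes the proof. The only step that required any care was ensuring the induction is well-posed, i.e.\ that peeling off one sink/source preserves the sink/source status of the remaining vertices of $X$; everything else is a direct computation with the characteristic polynomial.
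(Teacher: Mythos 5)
Your proof is correct and follows essentially the same route as the paper: reduce to a single sink or source, observe that the adjacency matrix becomes block triangular with a zero row (resp.\ column), conclude $\rho(A_E)=\rho(A_{E\setminus v})$ from the characteristic polynomial, and apply Theorem \ref{mantecao}. Your extra remark that removing a sink/source only deletes edges, so the remaining vertices of $X$ stay sinks or sources, is a small but worthwhile justification of the iteration that the paper leaves implicit.
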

\begin{proof}
We will just need to prove this for the case that $X = \{v\} \subseteq E^0$ with $v$ being a sink. If $v$ is a source is completely analogous. And the rest of the proof is an iteration of this argument. If we consider the adjacency matrix $A_E$, we have that
\begin{equation*}
    A_E = \left ( \begin{array}{cccccc}
         &   & * & & \\
         & &\vdots& & \\
         & & * && \\
        0 & \cdots& 0 & \cdots & 0 \\
            &  & * &&  &\\  
         & &  \vdots && & \\
        &  &  * & & & \\
    \end{array}\right )
\end{equation*}

with the $i$-th row of zeros corresponding to the vertex $v$. If we eliminate the $i$-th row and column, we get the adjacency matrix of $E\setminus v$. By computing the characteristic polynomial we get that 
\begin{equation*}
    \rho(A_E) = \max\{0,\rho(A_{E\setminus v})\} = \rho(A_{E\setminus v}).
\end{equation*}
Thanks to Theorem \ref{mantecao} we get that $\h(KE) = \h(KE\setminus v)$. 
\end{proof}

\begin{remark} \rm
    When you perform this elimination process for all the sinks and the sources in the original graph you will get a new one with different sources and sinks. You can iterate this process until there are not any sinks and sources in the final graph. \\
    Note that removing cycles indeed changes the entropy whenever the Condition EXC is not fulfilled. You can see this at the following examples.
\end{remark}

\noindent \begin{example}\rm
\begin{enumerate}
\item Let us now connect the two roses $R_n$ and $R_m$ of $n$ and $m$ petals with one edge. Then the graph has the adjacency matrix
$$
A= \begin{pmatrix}
n&1\\
0& m
\end{pmatrix}.
$$
The eigenvalues of the matrix are to be found on the diagonal. The spectral radius, i.e. $\lim_{n\to \infty}\|A^n\|^{\frac{1}{n}}$ is in this case the largest modulus of eigenvalue, which is $\max\{n,m\}$.
This shows  that a disjoint connection of two non-disjoint roses does not contribute to the entropy. Indeed we would have the same entropy $h_{alg}(E)=\log(\max\{n,m\})$, if the graph would just consist of the rose with the most petals.

\item If we again connect the two roses $R_n$ and $R_m$ but now with an edge from $R_n$ to $R_m$ and back. Then the graph has the adjacency matrix
$$
A= \begin{pmatrix}
n&1\\
1& m
\end{pmatrix}.
$$
The eigenvalues of the matrix are 
$$\lambda_{1,2}= \frac{n+m}{2}\pm \sqrt{\frac{n^2+m^2+4}{4}}$$
Hence since all entries are positive we obtain as largest eigenvalue $$\lambda_{1,2}= \frac{n+m}{2}+ \sqrt{\frac{n^2+m^2+4}{4}}.$$
The second edge back has now created non-disjoint cycles in which both roses are incorporated. Indeed the entropy is $h_{alg}(E)=\log(\frac{n+m}{2}+ \sqrt{\frac{n^2+m^2+4}{4}})$ and both roses contribute with their petals, as well as also the cycle in between.
\end{enumerate}
\end{example}

\begin{example}\rm Consider the $m$-rose petals graph $R_m$ with one vertex and $m$ loops. So $A_{R_m}=(m)$ and $\| A_{R_m}^n \|=m^n$. Therefore we have $\h(KR_m)=\limsup_{n\to\infty}\frac{\log(m^n)}{n}=\log(m).$
\end{example}

\textcolor{black}{We will now compute directly the entropy of the path algebras considered over the cycle $C_n$. For this we will make use of the previous results. }
\begin{example}\label{donut}\rm
Let us consider $E = C_n$ the cycle with vertices $E^0=\{v_0,v_1,\ldots,v_{n-1}\}$ and edges $E^1 = \{e_0,e_1,\ldots,e_{n-1}\}$ with $s(e_i) = v_i$ and $r(e_i) = v_{i+1}$ for $i = 0,1,\ldots, n-1$ $(\text{mod }n)$ as in Figure \ref{fig_ciclo_n}.

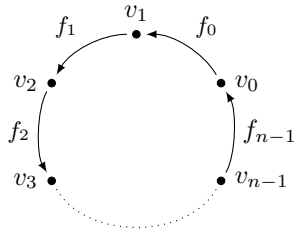
\begin{figure}[ht]
    \centering
    \begin{tikzpicture}[scale = 0.65, shorten <=2pt,shorten >=2pt,>=latex, node distance={15mm}, main/.style = {draw, fill, circle, inner sep = 1pt}]
\node[main,label = right:$v_0$] (1) at (30:\Rad) {}; 
\node[main,label = above:$v_1$] (2) at (90: \Rad) {};
\node[main, label = left:$v_2$] (3) at (150: \Rad) {};
\node[main, label = left:$v_3$] (4)  at (210: \Rad) {};
\node [main, label = right:$v_{n-1}$] (5) at (330: \Rad) {};
\draw[->] (1) to [out = 120, in = 0, looseness = 0.9] node [auto,swap] {\small{$f_{0}$}}  (2);
\draw[->] (2) to [out = 180, in = 60, looseness = 0.9] node [auto,swap] {\small{\small{$f_{1}$}}} (3) ;
\draw[->] (3) to [out = 240, in = 120, looseness = 0.75] node [auto,swap] {\small{\small{$f_{2}$}}}(4); 
\draw[dotted] (4) to [out = 300, in = 240, looseness = 1] (5);
\draw[->] (5) to [out = 60, in = 300, looseness = 0.75] node [auto,swap] {$f_{n-1}$}   (1);
\end{tikzpicture}
    \caption{Directed cycle with $n$ vertices.}
    \label{fig_ciclo_n}
\end{figure}

If we consider its corresponding adjacency matrix
\begin{equation*}
    \left ( \begin{array}{ccccc}
         0& 1 & 0 & \cdots & 0  \\
         0 & 0 & 1 & \cdots & 0 \\
         \vdots & \vdots & \vdots & \ddots & \vdots \\
         0 & 0 & 0 & \cdots & 1 \\ 
         1 & 0 & 0 & \cdots & 0 \\
    \end{array}\right ).
\end{equation*}
This matrix verify that $\|A_E^m\| = n$ for all $m \geq 0$. Then 
\begin{equation*}
    \h(KE) = \limsup_{m \to \infty} \frac{\log(n)}{n} = 0. 
\end{equation*}
If we consider $\hat{E}$, the extended graph of $C_n$, the adjacency matrix $A_{\hat{E}}$ is
\begin{equation*}
    \left ( \begin{array}{cccccc}
         0& 1 & 0 & \cdots & 0 & 1 \\
         1& 0 & 1 & \cdots& 0& 0 \\
         0 & 1 & 0 & \cdots &0 & 0 \\
         \vdots & \vdots & \vdots & \ddots & \vdots & \vdots \\
         0 & 0 & 0 & \cdots& 0& 1  \\ 
         1 & 0 & 0 & \cdots& 1& 0 \\
    \end{array}\right ).
\end{equation*}
This is a circulant matrix with eigenvalues $\lambda_j = \omega^j+\omega^{(n-1)j} = e^{i\frac{2\pi j}{n}} + e^{-i\frac{2\pi j}{n}} = 2 \cos(\frac{2\pi j }{n})$ for $j = 0,1, \ldots, n-1$ (\cite[Theorem 3.2.2]{circulant}). Then $\rho(A_{\hat{E}} ) = 2$ and thanks to Theorem \ref{mantecao} we have that $\h(K\hat{E}) = \log(2)$. We can check that Inequation \eqref{eqone} holds $0 \leq \log (2)$.
\\
In order to calculate the entropy of the Leavitt path algebra $L_K(E)$ we need to know the standard filtration. If we take into account that 
\begin{equation*}
\begin{split}
    e_ie_j & = \delta_{i+1,j}e_ie_{i+1}, \\
    e_j^*e_i^* & = \delta_{j,i+1} e_{i+1}^*e_i^*, \\
    e_ie_j^* & = \delta_{i,j} v_i,\\
    e_i^*e_j & = \delta_{i,j} v_{i+1}, 
\end{split}
\end{equation*}
the computation of $V_i$ will be easier. We have that
\begin{equation*}
\begin{split}
        V_0 & = \span(E^0),\\
        V_1 & = V_0 + \span(E^1 \cup (E^1)^*), \\
        V_2 & = V_1 + \span\{e_ie_{i+1}, e_{i+1}^*e_i^*, \text{ for } i= 0,\ldots,n-1 (\text{mod } n)\}, \\
        & \vdots \\
        V_k & = V_{k-1} + \span\{e_ie_{i+1}\cdots e_{i+k}, e_{i+k}^*\cdots e_i^*, \text{ for } i = 0,1, \ldots, n-1 (\text{mod } n)\}.
\end{split}
\end{equation*}
Thus, the entropy is
\begin{equation*}
    \h(L_K(E)) = \limsup_{k\to \infty} \frac{\log \left ( \frac{\dim \left ( V_{k+1} \right )}{\dim (V_k)}\right )}{k} = \limsup_{k\to \infty} \frac{\log \left ( 2n\right )}{k} = 0.
\end{equation*}
Once more, Inequality \eqref{eqone} holds $0 \leq 0 \leq \log(2)$. 
\end{example}

Note that the result in the previous example coincides with the following corollary and the later results of the section.

For $E$ a finite directed graph, the adjacency matrix $A_{\hat{E}}$ coincides with $A_E+A_E^t$, where $t$ denotes its transpose.
\begin{corollary}\label{log2} Let $E$ be a finite directed graph, then $\h(K\hat{E})=\log(\rho(A_E + A_E^t))$. Moreover, if $A_E$ is normal, that is, $A_E \cdot A_E^t=A_E^t \cdot A_E$, then $$\h(K\hat{E}) \leq \h(KE) +\log(2) \text{ and}$$
$$\h(L_K(E)) \leq \h(KE) +\log(2).$$
\end{corollary}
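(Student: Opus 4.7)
The plan is to compute the adjacency matrix of $\hat{E}$ explicitly in terms of $A_E$ and then apply Theorem \ref{mantecao} together with a spectral argument.

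First, I would identify $A_{\hat{E}} = A_E + A_E^t$. Indeed, the entry $(A_{\hat{E}})_{ij}$ counts edges $e \in \hat{E}^1$ with $s(e) = v_i$, $r(e) = v_j$. Splitting $\hat{E}^1 = E^1 \cup (E^1)^*$, the contribution from genuine edges is exactly $(A_E)_{ij}$, while an edge $e^*$ contributes when $s(e^*) = v_i$, $r(e^*) = v_j$, i.e.\ when $r(e) = v_i$ and $s(e) = v_j$, giving $(A_E)_{ji} = (A_E^t)_{ij}$. So $A_{\hat{E}} = A_E + A_E^t$, and Theorem \ref{mantecao} immediately yields $\h(K\hat{E}) = \log \rho(A_E + A_E^t)$.

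For the second assertion, assume $A_E$ is normal, i.e.\ $A_E A_E^t = A_E^t A_E$. Since $A_E$ has real entries, its adjoint (as a complex matrix) is $A_E^t$, so normality in this sense coincides with commuting with $A_E^t$. Hence $A_E$ and $A_E^t$ are simultaneously unitarily diagonalizable, and there is an orthonormal basis of $\mathbb{C}^n$ consisting of common eigenvectors. If $A_E v_k = \lambda_k v_k$, then $A_E^t v_k = \overline{\lambda_k} v_k$ (the eigenvalues of $A_E^t = A_E^*$ on the common basis are the conjugates). Consequently, the eigenvalues of $A_E + A_E^t$ are $\lambda_k + \overline{\lambda_k} = 2\,\mathrm{Re}(\lambda_k)$, so
\begin{equation*}
\rho(A_E + A_E^t) \;=\; \max_k |2\,\mathrm{Re}(\lambda_k)| \;\le\; 2 \max_k |\lambda_k| \;=\; 2\rho(A_E).
\end{equation*}
Taking logarithms and applying Theorem \ref{mantecao} once more,
\begin{equation*}
\h(K\hat{E}) \;=\; \log \rho(A_E + A_E^t) \;\le\; \log 2 + \log \rho(A_E) \;=\; \h(KE) + \log 2.
\end{equation*}

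Finally, the second inequality follows by combining the first one with Equation \eqref{eqone}, namely $\h(L_K(E)) \le \h(K\hat{E}) \le \h(KE) + \log 2$. The only subtle point is verifying that the standard notion of ``normal'' used here is the one with respect to the transpose (equivalently, the real-matrix adjoint), which matches precisely because $A_E$ has nonnegative integer entries; no genuine obstacle arises.
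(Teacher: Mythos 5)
Your proof is correct, and it reaches the key inequality by a genuinely different route than the paper. The first identity is handled the same way in both: you verify $A_{\hat E}=A_E+A_E^t$ by counting edges and ghost edges (the paper records this identity just before the corollary) and then invoke Theorem \ref{mantecao}; likewise, deducing $\h(L_K(E))\le \h(KE)+\log 2$ from the middle inequality via \eqref{eqone} is exactly the paper's final step. The divergence is in proving $\rho(A_E+A_E^t)\le 2\rho(A_E)$. The paper deduces it from the general Banach-algebra fact that the spectral radius is subadditive on commuting elements, $\rho(A+B)\le\rho(A)+\rho(B)$ whenever $AB=BA$ (citing Riesz), applied with $B=A_E^t$ and using $\rho(A_E^t)=\rho(A_E)$. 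You instead use the finite-dimensional spectral theorem: since $A_E$ is real, commuting with its transpose means $A_E$ is normal as a complex matrix, hence unitarily diagonalizable, $A_E^t=A_E^*$ acts on the common orthonormal eigenbasis by the conjugate eigenvalues, and the spectrum of $A_E+A_E^t$ is $\{2\,\mathrm{Re}(\lambda_k)\}$. Your argument is more elementary and self-contained in this matrix setting, and it yields the sharper statement $\rho(A_E+A_E^t)=2\max_k|\mathrm{Re}(\lambda_k)|$, from which the $\log 2$ gap follows immediately; the paper's argument is shorter once the commuting-operators lemma is granted and applies verbatim in any Banach algebra. Both hinge on the same normality hypothesis, and both are valid.
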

\begin{proof}The first inequality, it is easy to check taking into account that if  $A$ and  $B$ are  bounded linear  operators mapping a Banach space $(X, \vert \vert \cdot \vert \vert)$ into itself and these operators are commutative  (by Equation 21 p. 426 of \cite{Riesz}), then $\rho(A+B)\le \rho(A)+\rho(B)$. The second inequality is a consequence of \eqref{eqone}.
\end{proof}
\begin{remark}\rm Corollary \ref{log2} holds if $A_E$ is a symmetric matrix, in particular for $A_{R_n}$ with $R_n$ the $n$-rose petals graph. Also for $A_{C_n}$ with $C_n$ a cycle with $n$ vertices.
\end{remark}
\bigskip

\begin{lemma}\label{cumplealfi}
    Let $E$ be a finite directed graph satisfying Condition (EXC) and without sources and sinks. Then $\h(KE) = 0$.
\end{lemma}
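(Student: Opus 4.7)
The plan is to invoke Theorem~\ref{mantecao}, which reduces the problem to showing that $\rho(A_E) = 1$, as this gives $\h(KE) = \log \rho(A_E) = 0$. The argument will be purely graph-theoretic: I will pin down the structure of a finite graph satisfying (EXC) with no sources or sinks, and then read off the spectral radius from the block form of the adjacency matrix.

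The first step is a structural observation: every vertex of $E$ lies on some cycle. Since $E$ is finite and every vertex has at least one outgoing edge (no sinks), following outgoing edges from any vertex $v$ produces a walk that must eventually revisit a vertex. Because $v$ also has incoming edges (no sources), a standard pigeonhole/backtracking argument shows that $v$ itself lies on a closed path, from which one extracts a cycle through $v$. Under (EXC) this cycle is in fact unique.

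Next I would analyze the strongly connected components (SCCs) of $E$. The key structural claim is that each SCC is precisely the vertex set of a single simple cycle, with no additional vertices or edges inside the SCC. Indeed, if an SCC contained two distinct cycles $C_1, C_2$, strong connectivity yields directed paths from $C_1$ to $C_2$ and from $C_2$ back to $C_1$; splicing these with suitable arcs of $C_1$ and $C_2$ produces a closed path containing a simple cycle that meets both $C_1$ and $C_2$, contradicting (EXC). The same splicing trick rules out an intra-SCC vertex $u$ lying off the SCC's cycle $C$ (since by the first step $u$ lies on some cycle $C_u$, and one obtains a cycle meeting both $C$ and $C_u$), and it also rules out any intra-SCC edge not appearing on that cycle (since such an edge, combined with the arcs of the cycle, furnishes a second cycle sharing vertices with the first).

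Order the SCCs compatibly with the quotient DAG; then $A_E$ acquires block upper-triangular form in which each diagonal block is the adjacency matrix of a single simple cycle, i.e.\ a cyclic permutation matrix (or the $1\times 1$ matrix $(1)$ for a self-loop). Permutation matrices have all eigenvalues of modulus one, so every diagonal block has spectral radius exactly $1$. Since the spectrum of a block upper-triangular matrix is the union of the spectra of its diagonal blocks, we conclude $\rho(A_E)=1$, and Theorem~\ref{mantecao} yields $\h(KE)=\log 1 = 0$. The main obstacle I expect is the careful verification of the SCC structural claim: each of the three exclusions (two cycles in one SCC, an off-cycle vertex, an off-cycle intra-SCC edge) must be witnessed by an explicit cycle in $E$ violating (EXC), and one must pass correctly from closed paths to genuine simple cycles sharing a prescribed vertex with an already given cycle.
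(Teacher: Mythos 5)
Your overall strategy is sound and close in spirit to the paper's: both proofs reduce the statement to showing $\rho(A_E)=1$ via Theorem \ref{mantecao} and then exploit a block upper-triangular form of $A_E$. (The paper proceeds by induction on the number of cycles, at each step splitting off a cycle without entries and invoking Corollary \ref{jamon} to discard the sinks and sources created by the deletion; you instead take the full strongly connected component decomposition in one shot.) However, your first structural claim --- that the absence of sources and sinks forces \emph{every} vertex of $E$ to lie on a cycle --- is false. Take $E^0=\{a,v,b\}$ with a loop at $a$, a loop at $b$, and edges $a\to v$ and $v\to b$: this graph is finite, satisfies (EXC), has no sources and no sinks, yet $v$ lies on no closed path. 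Having both an incoming and an outgoing edge does not put a vertex on a cycle; the backtracking argument you allude to only produces a cycle reachable \emph{from} $v$ and a cycle from which $v$ is reachable, not a cycle \emph{through} $v$.

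Fortunately the error is localized and the argument is repairable. The claim is only needed inside a nontrivial strongly connected component $S$, and there it holds for a different reason: strong connectivity (not the no-source/no-sink hypothesis) guarantees a closed walk through any $u\in S$, from which a cycle through $u$ inside $S$ is extracted. Vertices lying on no cycle form trivial SCCs and contribute $1\times 1$ zero diagonal blocks, which do not disturb the computation, since $\rho(A_E)$ is the maximum of the spectral radii of the diagonal blocks and at least one nontrivial SCC exists by finiteness and the absence of sinks. For the key claim that under (EXC) each nontrivial SCC is a single cycle, a cleaner route than splicing paths is a degree argument: if some vertex of $S$ had two distinct outgoing (or incoming) edges inside $S$, each could be completed to a cycle through that vertex by a shortest return path in $S$, producing two distinct cycles sharing a vertex and contradicting (EXC); hence every vertex of $S$ has in- and out-degree one within $S$, and a strongly connected graph with this property is a single cycle. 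With these repairs your proof goes through and is a legitimate alternative to the paper's induction.
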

\begin{proof}
   It will only be necessary to prove this result for connected graphs, the case of non-connected graphs is a direct consequence of Corollary \ref{cereales}.  We will proceed to make our proof by induction on the number of cycles. If $E$ has one cycle without sinks and sources, then $E = C_n$ and we have proven in Example \ref{donut} that $\h(KE) = \h(KC_n) = 0$. Let us assume that this result is true for $k$ cycles. Let us consider a graph $E$ with $k+1$ cycles without sources or sinks. This implies that there is at least one cycle without entries or a cycle without exits. Without loss of generality, suppose there is a cycle $C_n$ without entries. For a suitable order of the vertices, the adjacency matrix of $E$ is:    

    \begin{equation*}
        A_E = \left ( \begin{array}{cc}
            A_{C_n} & * \\
            0 & A_F
        \end{array} \right )
    \end{equation*}
    with $F = E\setminus C_n^0$. Accordingly, $\rho(A_E) = \max\{ \rho(C_n),\rho(A_F)\} = \max\{1,\rho(A_F)\}$. The graph $F$ has $k$ cycles, but we may have sinks or sources. However, Corollary \ref{jamon} implies that we get the same entropy even if eliminate all the sources and sinks. Now we are under our induction hypothesis so $\h(KF) = 0$ and this means by Theorem \ref{mantecao} that $\rho(A_F) = 1$. Finally, $\rho(A_E) = 1$ and $\h(KE) = 0$.

\end{proof}

Summarizing we obtain:
\begin{theorem} \label{hihi}
Let $E$ be a finite directed graph and $KE$ the associated path algebra, then 
\begin{itemize}
    \item[i)] $\gkdim(KE) =  0$ if and only if $KE$ is finite-dimensional;
    \item[ii)]  If $0\neq  \gkdim(KE)<\infty$, then $\h(KE)=0$ and $KE$ is infinite dimensional.
\end{itemize}
\end{theorem}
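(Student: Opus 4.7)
\medskip

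\noindent\textbf{Proof plan.} For part (i), the plan is to establish the equivalence by contrapositive on the harder direction. First, if $KE$ is finite-dimensional then the standard filtration $\{V_n\}$ stabilizes, hence $\dim(V_n)$ is eventually constant and $\gkdim(KE)=0$ trivially. For the converse, I would argue that a finite graph $E$ yields an infinite-dimensional path algebra precisely when $E$ contains a cycle: with no cycles the set of paths is finite (bounded length), while a cycle $c$ based at a vertex $v$ produces the infinitely many linearly independent paths $c, c^2, c^3, \ldots$ in $vKEv$. These powers generate a subalgebra isomorphic to $K[x]$ (or a quotient mapped surjectively by $K[x]\to vKEv$ that is injective on the monomials), which has Gelfand--Kirillov dimension $1$. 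Using that $\gkdim$ is monotone under subalgebras of finitely generated algebras, this forces $\gkdim(KE)\ge 1>0$.

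For part (ii), the strategy rests on the observation that $0<\gkdim(KE)<\infty$ forces $E$ to satisfy Condition (EXC). If instead two cycles $C_1,C_2$ met at a common vertex $v$, letting $c_1,c_2$ denote the associated closed paths based at $v$, the words $c_{i_1}c_{i_2}\cdots c_{i_k}$ with $i_j\in\{1,2\}$ are pairwise distinct paths in $E$ and hence linearly independent in $vKEv\subset KE$. Since there are $2^k$ such words of bounded length in $k$, this yields exponential growth and $\gkdim(KE)=\infty$, a contradiction. Thus $E$ is (EXC), and of course $KE$ is infinite-dimensional by part (i).

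Once Condition (EXC) is secured, the entropy conclusion follows by reduction. The plan is to iteratively apply Corollary \ref{jamon} to strip off sinks and sources: each removal preserves both the entropy and the property of being (EXC) (removing a sink or source cannot create non-disjoint cycles where none existed). Since $E$ is finite, the process terminates at some induced subgraph $E'$ with no sinks and no sources; because $KE$ is infinite-dimensional, $E$ contains a cycle, all cycles survive every removal step, and so $E'$ is nonempty and still (EXC). Lemma \ref{cumplealfi} then gives $\h(KE')=0$, and Corollary \ref{jamon} transports this back to $\h(KE)=\h(KE')=0$.

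The main obstacle I anticipate is the exponential-growth argument in the second step: one needs the words $c_{i_1}\cdots c_{i_k}$ to be genuinely distinct as paths in $E$, not merely distinct as formal symbols. Because $C_1$ and $C_2$ share at least one vertex but are different cycles, the closed paths $c_1,c_2$ based at $v$ differ as edge sequences, and different orderings of the $c_{i_j}$ produce different underlying edge sequences in $E$; one must verify this carefully, perhaps by comparing initial edges or by induction on $k$, to ensure the $2^k$ estimate is legitimate. Everything else in the argument is essentially a bookkeeping exercise combining the already-proved Lemma \ref{cumplealfi}, Corollary \ref{jamon}, and Theorem \ref{mantecao}.
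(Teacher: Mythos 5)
Your argument for the entropy conclusion in part (ii) is exactly the paper's route: reduce to a sink- and source-free graph via iterated application of Corollary \ref{jamon}, observe that a cycle guarantees the reduced graph is nonempty, and invoke Lemma \ref{cumplealfi} (together with Corollary \ref{cereales} for disconnected graphs, which Lemma \ref{cumplealfi} already absorbs). Where you diverge is that the paper simply cites \cite[Theorem 3.12]{moreno2018graph} both for part (i) and for the fact that $0<\gkdim(KE)<\infty$ forces Condition (EXC), whereas you supply direct elementary proofs: the $K[x]$ subalgebra argument for (i), and the $2^k$ distinct words $c_{i_1}\cdots c_{i_k}$ for the (EXC) implication. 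This makes the theorem self-contained at the cost of one nontrivial verification, which you correctly identify as the only real obstacle. On that point: ``comparing initial edges'' will not settle it, since two distinct cycles through $v$ may share their first edge (or a longer common prefix). The clean way is the defect theorem for free monoids: two words either generate a free submonoid or are both powers of a common word $d$; in the latter case $c_i=d^{m_i}$ with $m_i\ge 2$ would force $c_i$ to revisit $s(d)$, contradicting that $C_i$ is a cycle, so $c_1=d=c_2$, contradicting distinctness. Hence the $2^k$ words are pairwise distinct paths, and since paths form a basis of $KE$ the exponential lower bound on $\dim(V_n)$ is legitimate. With that filled in, your proof is correct and, apart from replacing the external citation by direct arguments, matches the paper's.
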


\begin{proof} For the first statement apply \cite[Theorem 3.12]{moreno2018graph}. Now we prove item (ii). If $0<\gkdim(KE) = k < \infty$, then by \cite[Theorem 3.12]{moreno2018graph}  we know that the maximal length of chains of cycles in $E$ is $k$. By removing subsequently all sinks and all sources we get a graph with the same entropy (Corollary \ref{jamon}) and by Lemma \ref{cumplealfi}, each connected component has null entropy. Finally, by Corollary \ref{cereales}, we obtain $\h(E) = 0$. Moreover, the dimension of $KE$ is infinite since the graph $E$ has at least one cycle. 
\end{proof}

\begin{lemma}
 Let $A$ be a filtered algebra with filtration $\{V_n\}_{n >0}$ such that $h_{alg}(A)= \infty$, then $\dim(V_n/V_{n-1})$ grows superexponential, i.e.~
 $$\limsup_{n\to \infty} \frac{\dim(V_n/V_{n-1})}{c^n} = \infty,\quad \text{ for any } c>0.$$
\end{lemma}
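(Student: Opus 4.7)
The plan is to unpack the definition of $h_{alg}(A) = \infty$ and then translate this statement about logarithms into the desired superexponential growth statement by choosing appropriate constants.

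First I would recall that by definition
\[
h_{alg}(A) = \limsup_{n\to\infty} \frac{\log \dim(V_n/V_{n-1})}{n} = \infty.
\]
The meaning of a limsup equal to $+\infty$ is that for every real constant $M > 0$, the inequality $\frac{\log \dim(V_n/V_{n-1})}{n} > M$ holds for infinitely many indices $n$. Equivalently, for every such $M$, there is an infinite subsequence $\{n_k\}$ along which $\dim(V_{n_k}/V_{n_k-1}) > e^{M n_k}$.

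Next, given an arbitrary $c > 0$, I would select $M$ large enough so that $e^M > c$ (for instance $M := \log c + 1$, or any $M > \log c$). Applying the observation above to this choice of $M$ yields a subsequence $\{n_k\}$ with
\[
\frac{\dim(V_{n_k}/V_{n_k-1})}{c^{n_k}} > \frac{e^{M n_k}}{c^{n_k}} = \left(\frac{e^M}{c}\right)^{n_k}.
\]
Since $e^M/c > 1$, the right-hand side tends to $\infty$ as $k \to \infty$, so the ratio on the left is unbounded along this subsequence. This forces $\limsup_{n\to\infty} \dim(V_n/V_{n-1})/c^n = \infty$, and since $c > 0$ was arbitrary the conclusion follows.

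There is no real obstacle here: the statement is essentially a translation between two standard formulations of superexponential growth, and the only mild care required is in moving between the $\log$-form of the limsup and the exponential-form, together with a clean choice of $M$ in terms of $c$. Writing it out rigorously simply amounts to the two-line computation above.
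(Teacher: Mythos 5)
Your proof is correct and follows essentially the same route as the paper: unpack the meaning of the infinite $\limsup$ in the definition of $h_{alg}$ and convert the logarithmic bound into an exponential one. In fact your choice of $M$ strictly greater than $\log c$, giving a ratio bounded below by $\left(e^M/c\right)^{n_k}\to\infty$ along a subsequence, makes the final step slightly more explicit than the paper's version, which only records that $\dim(V_n/V_{n-1})>c^n$ for suitable $n$.
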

\begin{proof}
Under the hypothesis we have that $\limsup_{n\to\infty}\frac{
\log\dim(V_n/V_{n-1})}{n}=\infty.$
Then for any $c>0$, there exists $n$ with 
$$\frac{
\log\dim(V_n/V_{n-1})}{n}>\log c.$$
Accordingly, we have $\log \dim(V_n/V_{n-1}) > n\log c=\log c^n$. Now, since the exponential function is monotonic increasing, we obtain 
$\dim(V_n/V_{n-1})>c^n.$ 
The conclusion follows.
\end{proof}

\begin{definition}\label{def:growthclass}
For a filtered  $K$-algebra $A$ we say:  \begin{itemize}
    \item $A$ is of Class $0$ if $A$ is finite dimensional. 
    \item $A$ is of Class $1$ if $A$ is  infinite dimensional with $\gkdim(A)<\infty$. 
    \item $A$ is of Class $2$ if $A$ $\gkdim(A)=\infty$ and $h_{alg}(A)< \infty$.
\end{itemize}
\end{definition}
\begin{theorem}[Growth Trichotomy]
Let $E$ be a finite graph. For $A=KE$ or $A=L_K(E)$ we have that $A$ is in exactly in one growth classes from Definition \ref{def:growthclass}.
 In particular, we have  $h_{alg}(A)<\infty$.
\end{theorem}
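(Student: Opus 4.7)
The plan is to verify that the three classes of Definition~\ref{def:growthclass} are pairwise disjoint by construction and then to place each $A\in\{KE,L_K(E)\}$ (with $E$ finite) into one of them. Disjointness is immediate from the definitions: Class~0 forces $A$ to be finite dimensional (hence $\gkdim(A)=0$), Class~1 forces $A$ to be infinite dimensional with $\gkdim(A)<\infty$, and Class~2 forces $\gkdim(A)=\infty$. The only way an algebra can escape the trichotomy is to have $\gkdim(A)=\infty$ while simultaneously $\h(A)=\infty$, so the whole substance of the theorem is the assertion that $\h(A)$ is always finite.

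For $A=KE$ this is a direct consequence of Theorem~\ref{mantecao}: that result gives $\h(KE)=\log\rho(A_E)$, and since $E$ is finite the adjacency matrix $A_E$ is a finite square integer matrix, whose characteristic polynomial has degree $|E^0|$ and therefore only finitely many roots; in particular $\rho(A_E)<\infty$, so $\h(KE)<\infty$.

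For $A=L_K(E)$ the idea is to reduce to the path algebra case via the sandwich inequality~\eqref{eqone}, $\h(L_K(E))\le\h(K\hat E)$. Since $\hat E$ is again a finite graph, applying the preceding paragraph with $\hat E$ in place of $E$ yields $\h(K\hat E)=\log\rho(A_{\hat E})<\infty$, and hence $\h(L_K(E))<\infty$. Combined with Theorem~\ref{hihi} (which pins down when $\gkdim(KE)$ is zero and when it is positive and finite) together with its Leavitt analogue coming from Theorem~\ref{zelgk}, this places every such $A$ in exactly one of the three classes.

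No step is genuinely difficult here: the computational input (Theorem~\ref{mantecao}) and the comparison input (inequality~\eqref{eqone}) are already on the table, and the trichotomy itself is just the bookkeeping consolidation of those two ingredients. The only point requiring care is the boundary $\gkdim(A)=0$: one must confirm that in both the path and Leavitt path settings this really does force $A$ to be finite dimensional, so that Classes~0 and~1 do not overlap. For $KE$ this is Theorem~\ref{hihi}(i); for $L_K(E)$ it follows from Theorem~\ref{zelgk} since a graph satisfying (EXC) whose chains of cycles have length zero is acyclic, hence has a finite-dimensional Leavitt path algebra.
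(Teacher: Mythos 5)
Your proposal is correct and follows essentially the same route as the paper: finiteness of $\h(A)$ comes from the spectral radius formula of Theorem \ref{mantecao} applied to the finite adjacency matrix, and the classification then follows from Theorem \ref{hihi}. The only difference is that you make explicit the reduction of the Leavitt case to $K\hat E$ via inequality \eqref{eqone}, a step the paper's proof leaves implicit; this is a welcome clarification rather than a divergence.
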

\begin{proof}
This is a direct consequence of fact that a finite graph has a finite adjacency matrix $A$ and hence the spectral radius is finite for all $A^n$. This leads to a finite or vanishing algebraic entropy by Theorem \ref{mantecao}. The claim then follows by Theorem \ref{hihi}.
\end{proof}

As a consequence $A=L_K(E)$, or $A=KE$ where $E$ is a finite graph, we can
   construct a triple $t(A,\F):=(\dim(A),\gkdim(A),\h(A))$ associated to the filtered algebra $(A,\F)$.  So $t$ is a map from the class of objects of the category of filtered algebras to $\overline{\mathbb{R}}^3$ where $\overline{\mathbb{R}}=\mathbb{R}\cup\{\infty\}$.
   This is an invariant since any isomorphism in the category of filtered algebras induces an equality of the corresponding 
   triples. All the (filtered) algebras in Class 1 are classified by Gelfand-Kirillov dimension since they all have $0$ entropy. To be more precise any two algebras $A$ and $B$ of Class 1 with $t(A)\ne t(B)$ can not be isomorphic. 
On the other hand, the algebras in class 2 have the same Gelfand-Kirillov dimension infinity, and they can be distinguished by entropy. The growth trichotomy then reads as the following corollary.

\begin{corollary} Let $A= KE$ or $L_K(E)$ for a finite graph $E$. Then $A$ can be classified into three types as follows: if $t(A,\F):=(\dim(A), \gkdim(A), \h(A))$ one has
\begin{enumerate}
    \item $t(A,\F)=(k,0,0)$ for $k < \infty$; or
    \item $t(A,\F)=(\infty,l,0)$ for $l<\infty$; or
    \item $t(A,\F)=(\infty,\infty,m)$ for $m<\infty$.
\end{enumerate}
\end{corollary}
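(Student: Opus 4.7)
The plan is to view this corollary as a direct unpacking of the Growth Trichotomy theorem: it records, for each of the three classes in Definition \ref{def:growthclass}, the corresponding explicit values of the triple $t(A,\F) = (\dim(A), \gkdim(A), \h(A))$. Since the Growth Trichotomy has already established that $A$ falls into exactly one class and that $\h(A) < \infty$ in every case, what remains is simply to identify the triple in each class.

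Two of the three cases are essentially bookkeeping. For Class 0 (finite-dimensional), $\dim(A) = k < \infty$ by definition, any finite-dimensional algebra has $\gkdim = 0$, and $\h(A) = 0$ by our very definition of algebraic entropy in the finite-dimensional case, giving $t(A,\F) = (k,0,0)$. For Class 2, the hypotheses force $\gkdim(A) = \infty$ (which automatically implies $\dim(A) = \infty$) and $\h(A) = m < \infty$, so $t(A,\F) = (\infty,\infty,m)$.

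The substantive step is Class 1, where I need to verify that $\h(A) = 0$ whenever $A$ is infinite-dimensional with $\gkdim(A) = l < \infty$. For $A = KE$ this is exactly the content of Theorem \ref{hihi}(ii): after stripping sinks and sources via Corollary \ref{jamon} and using the characterization of finite GK-dimension in terms of chains of cycles, Lemma \ref{cumplealfi} yields $\h(KE) = 0$ on each connected component and Corollary \ref{cereales} assembles the result globally. For $A = L_K(E)$, Zelmanov's Theorem \ref{zelgk} tells us that $\gkdim(L_K(E)) < \infty$ is again equivalent to $E$ satisfying Condition (EXC), and I would obtain the vanishing of $\h(L_K(E))$ by decomposing the algebra into Morita blocks of the form $M_n(K)$ and $M_n(K[x,x^{-1}])$ (the comet and polycephaly building blocks already flagged in the paper), each of which has zero entropy via the direct computation $\h(K[x,x^{-1}]) = 0$ together with the Morita invariance of Theorem \ref{theorem_entropy_morita}.

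The main obstacle is precisely this Leavitt case. The naive bound $\h(L_K(E)) \le \h(K\hat{E})$ from \eqref{eqone} is typically too coarse to conclude vanishing, because the extended graph $\hat{E}$ introduces the ghost edges $e^*$ and thereby produces non-disjoint cycles even when $E$ itself is EXC (as Example \ref{donut} shows, where $\h(K\hat{C_n}) = \log 2$ while $\h(L_K(C_n)) = 0$). So instead of proceeding through $\hat{E}$, the argument must pass through the structural decomposition of Leavitt path algebras of (EXC) graphs and exploit the Morita-invariance of entropy, which is exactly what the remark following Theorem \ref{theorem_entropy_morita} is doing in the specific cases listed there.
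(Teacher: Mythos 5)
Your handling of Classes 0 and 2, and of Class 1 for $A=KE$, matches the paper's route (Theorem~\ref{hihi} together with the finiteness of the spectral radius of the adjacency matrix). The genuine gap is in your treatment of Class 1 for $A=L_K(E)$. You propose to decompose $L_K(E)$, for $E$ satisfying (EXC), into Morita blocks of the form $M_n(K)$ and $M_n(K[x,x^{-1}])$; but such a decomposition exists only for very special graphs (acyclic graphs, comets, polycephaly graphs), essentially those in which no cycle has an exit and no chain of cycles has length $\ge 2$. The Toeplitz graph (one loop with an exit into a sink) satisfies (EXC) and has $\gkdim(L_K(E))=2$ by Theorem~\ref{zelgk}, yet its Leavitt path algebra is not a direct sum of matrix algebras over $K$ and $K[x,x^{-1}]$; the same failure occurs for any two disjoint cycles joined by a path. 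So the argument you outline does not cover the general Class-1 Leavitt case. A secondary caution: Theorem~\ref{theorem_entropy_morita} only produces \emph{some} filtration on the Morita-equivalent algebra realizing the same entropy, and Example~\ref{morita_example} shows that standard-filtration entropies are not Morita invariant; to transfer the value $0$ back to the standard filtration one must invoke item (\ref{mandarina}) of Proposition~\ref{sopadecebolla}, as the paper does for $L_K(C_n)$, and you do not make this step explicit.

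The step you isolate as the main obstacle in fact has a one-line proof that works uniformly for $KE$ and $L_K(E)$ and needs no structure theory: if $\gkdim(A)=l<\infty$ for the standard filtration $\{V_n\}$ (which satisfies $V_n=(V_1)^n$), then for any $\varepsilon>0$ one has $\dim(V_n)\le n^{\,l+\varepsilon}$ for all large $n$, hence $\dim(V_n/V_{n-1})\le\dim(V_n)\le n^{\,l+\varepsilon}$ and $\frac{\log\dim(V_n/V_{n-1})}{n}\le\frac{(l+\varepsilon)\log n}{n}\to 0$; since $A$ is infinite dimensional, infinitely many of these quotients are nonzero, so the $\limsup$ equals $0$. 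Replacing your Morita argument by this observation closes the gap.
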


\section{Some examples computing the algebraic entropy of $L_K(E)$.}
In this section we analyze some examples in which we compute $\h(L_K(E))$ for certain graphs $E$. This gives us evidence of how close the numbers $\h(KE)$ and $\h(L_K(E))$ are. First, we need to count the number of linearly independent elements $\lambda \mu^*$ in $L_K(E)$ with $l(\lambda)+l(\mu) \leq k \in \mathbb{N}$, $\lambda,\mu \in {\rm Path}(E)$. This formula is a consequence of Theorem 38 of \cite{Bocksebandal}.

\begin{proposition} Let $E$ be a finite graph, $A_E$ its corresponding adjacency matrix and fix $k \in \mathbb{N}$. In $L_K(E)$ the number of linearly independent elements of the form $\lambda \mu^*$ such that $l(\lambda)+l(\mu)=k$, $\lambda,\mu \in {\rm Path}(E)$, that is $\dim(V_k/V_{k-1})$, is equal to:
\begin{equation}\label{countpaths}
    \sum_{s=0,j=1}^{k,n} \left (\sum_{i=1}^n (A_E^s)_{i,j} \right ) \left (\sum_{i=1}^n(A_E^{k-s})_{i,j} \right )- \sum_{s=1,j=1}^{k-1,n} \left ( \sum_{i=1}^n (A_E^{s-1})_{i,j}\right ) \left ( \sum_{i=1}^n (A_E^{k-s-1})_{i,j} \right )\gamma_{j}
\end{equation}
where $\gamma_j = 0$ if $\sum_{m=1}^n (A_E)_{j,m} = 0$, else $\gamma_j =1$.
\end{proposition}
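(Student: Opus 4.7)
The plan is a direct counting argument: the generators of $V_k/V_{k-1}$ are the monomials $\lambda\mu^*$ with $r(\lambda)=r(\mu)$ and $\ell(\lambda)+\ell(\mu)=k$, and the only dependences among them come from the (CK2) relations lifted to degree $k$. The first sum in \eqref{countpaths} will count the generators, and the second sum will subtract the independent (CK2) relations supported in degree $k$.

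For the first sum, I would use the standard fact that $(A_E^s)_{i,j}$ equals the number of paths of length $s$ from $v_i$ to $v_j$. Consequently, the number of paths of length $s$ ending at $v_j$ is $\sum_{i=1}^n(A_E^s)_{i,j}$, and the number of pairs $(\lambda,\mu)$ with $\ell(\lambda)=s$, $\ell(\mu)=k-s$, and common range $v_j$ is the product of the two column sums. Summing over $s=0,\ldots,k$ and $j=1,\ldots,n$ yields the first sum, which is the total number of generators of $V_k/V_{k-1}$.

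For the second sum, let $v_j$ be a regular vertex and $\alpha,\beta$ paths with $r(\alpha)=r(\beta)=v_j$. Multiplying (CK2) at $v_j$ by $\alpha$ on the left and $\beta^*$ on the right gives
\begin{equation*}
\alpha\beta^*=\sum_{e\colon s(e)=v_j}(\alpha e)(\beta e)^*.
\end{equation*}
When $\ell(\alpha)+\ell(\beta)=k-2$, the left-hand side lies in $V_{k-2}\subset V_{k-1}$ while each summand on the right has total length $k$, so the relation reduces to $\sum_{s(e)=v_j}\overline{(\alpha e)(\beta e)^*}=0$ in $V_k/V_{k-1}$. Setting $s=\ell(\alpha)+1\in\{1,\ldots,k-1\}$, the number of triples $(\alpha,\beta,v_j)$ giving rise to such relations is exactly the second sum; the indicator $\gamma_j$ correctly restricts to regular vertices, since in the finite graph $E$ non-sink equals regular.

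The main obstacle is justifying, first, that these lifted (CK2) relations are $K$-linearly independent in $V_k/V_{k-1}$, and second, that they span all dependences among the monomial generators. The former is easy: a monomial $(\alpha e)(\beta e)^*$ uniquely determines the factorization $(\alpha,\beta,e)$ and hence the triple $(\alpha,\beta,v_j=s(e))$, so distinct triples yield relations with disjoint supports. The latter is precisely the content of the basis theorem \cite[Corollary 1.5.12]{AAS}: once elements of $L_K(E)$ have been reduced to $K$-combinations of $\lambda\mu^*$ using (V), (E), (CK1), the only remaining linear dependences are generated by (CK2). Combining these two facts gives $\dim(V_k/V_{k-1})=\text{(first sum)}-\text{(second sum)}$, which is \eqref{countpaths}.
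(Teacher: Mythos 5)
Your proof is correct, but it is worth noting that the paper itself gives no argument at all for this proposition: it simply defers to Theorem 38 of the cited reference \cite{Bocksebandal}. So your self-contained derivation is necessarily a different route, and it is a sound one. Your count of the generators via column sums of $A_E^s$ is exactly right, and your treatment of the relations is the key point: the lifted (CK2) identities $\alpha\beta^*=\sum_{s(e)=v_j}(\alpha e)(\beta e)^*$ with $\ell(\alpha)+\ell(\beta)=k-2$ collapse modulo $V_{k-1}$ to homogeneous relations in degree $k$, their number is the second sum (with $\gamma_j$ correctly excluding sinks, which for a finite graph are the only non-regular vertices), and the disjoint-support argument for their independence is valid since $(\alpha e)(\beta e)^*$ determines $(\alpha,\beta,e)$ once the bidegree $(s,k-s)$ is fixed. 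The only place where your wording is slightly looser than the logic requires is the appeal to \cite[Corollary 1.5.12]{AAS}: that corollary literally exhibits a basis (the monomials $\lambda\mu^*$ in which $\lambda$ and $\mu$ do not both end in the chosen special edge of their common source), rather than stating that ``all dependences are generated by (CK2).'' The cleanest closing step is to observe that each of your relations eliminates exactly one non-basis monomial of degree $k$, so the number of basis monomials of degree $k$ is (first sum) $-$ (second sum), and $\dim(V_k/V_{k-1})$ equals that count because $V_k$ is spanned by the basis monomials of total degree at most $k$. This is in fact the same bookkeeping the paper performs concretely for $R_n$ in Example \ref{cena_navidad}, so your argument is consistent with, and fills in, what the paper leaves to an external reference.
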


We include the source code implemented with  Mathematica software ({\it Mathematica-Wolfram Research, Inc.})

{\small

\begin{verbatim}
    p[A_, m_, a_, b_] :=
  Module[{n}, n = Length[A]; 
   If[m == 0, IdentityMatrix[n][[a, b]], (MatrixPower[A, m])[[a, b]]]];
\end{verbatim} }

The above command {\tt p[A,m,a,b]} computes the $(a,b)$-entry of the $m$-th power of the matrix $A$. The next one, {\tt cond[A,j]}, codifies the $\gamma_j$ function defined in \eqref{countpaths}. 

{\small
\begin{verbatim}
cond[A_, j_] := Module[{n}, n = Length[A]; Sign[Sum[p[A, 1, j, m], {m, n}]]];
\end{verbatim}}

The function {\tt aux[A,j,s,n]} computes the second part 
in \eqref{countpaths} when $\gamma_j=1$, that is, when {\tt cond[A,J]}
evaluates to \lq\lq TRUE\rq\rq.

{\small
\begin{verbatim}
aux[A_, j_, s_, k_] := 
 Module[{n}, n = Length[A]; 
  Sum[p[A, s - 1, i, j], {i, n}] Sum[p[A, k - s - 1, i, j], {i, n}]]; 
  \end{verbatim}}

And finally {\tt h[A,k]} computes the algebraic entropy of the Leavitt path algebra with adjacency matrix $A$, based on the computations of linearly independent elements $\lambda\mu^*$ given by equation \eqref{countpaths}.

{\small
\begin{verbatim}
h[A_, k_] := N[Log[Module[{n}, n = Length[A]; Sum[
       Sum[
        Sum[p[A, s, i, j], {i, n}] Sum[p[A, k - s, i, j], {i, n}], {j,
          n}], {s, 1, k - 1}] - 
      Sum[Sum[aux[A, j, s, k] cond[A, j], {j, n}], {s, 1, k - 1}] + 
      2 Sum[p[A, k, i, j], {i, n}, {j, n}]]]/k];
\end{verbatim}}

By using the above Mathematica code, all the examples considered, seem to suggest the coincidence of both numbers $\h(L_K(E))$ and $\h(KE)$.

\begin{example}\label{otrografito}\rm Consider the following graph $E$:

\begin{figure}[H]\label{otroejemplo}
    \begin{center}
         \begin{tikzpicture}[scale = 0.65, shorten <=2pt,shorten >=2pt,>=latex, node distance={5mm},sub/.style = {draw, fill, circle, inner sep = 1pt}, main/.style = {draw, fill, circle, inner sep = 1pt}, sub/.style = {draw = red, fill = red, circle, inner sep = 1pt}]
    \node[main,label = left:$\tiny 1 $] (1) at (3,0) {};
    \node[main,label = above:$2$] (2) at (5.8,0) {};
    \node[main,label = above:$3$] (3) at (9,1.2) {};
    \node[main,label = below: $4$] (4) at (9,-1.2) {};
     
    \draw[->] (1) to [bend right = 50] (2);
    \draw[->] (2) to [bend right = 50] (1);
    \draw[->] (3) to (2);
    \draw[->] (4) to (2);
     \draw[->] (3) to [bend right = 50] (4);
     \draw[->] (4) to [bend right = 50] (3);
    \end{tikzpicture}
    \caption{Directed graph $E$ in Example \ref{otrografito}}
    \end{center}
\end{figure}
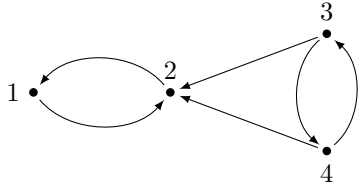

Taking into account formula (\ref{countpaths}), computing $\displaystyle\frac{\log(\dim(V_k/V_{k-1}))}{k}$ is the best numerical approximation to $\h(L_K(E))$. For instance,  we get with {\it Mathematica} that $$\displaystyle \frac{\log(\dim(V_{1000}/V_{999}))}{1000}=0.0145107.$$ On the other hand, the logarithm of the spectral radius (the maximum of the absolute values of the eigenvalues of $A_E$) equals $0$, having $\h(KE)=0$ by Theorem \ref{mantecao}. Increasing the value of $k$ we approximate more closely to $0$. 
\end{example}

\begin{example} \label{villancico}
\rm Next we take the graphs $F_1$ and $F_2$ given by:
\begin{figure}[ht]
    \begin{center}
         \begin{tikzpicture}[scale = 0.65, shorten <=2pt,shorten >=2pt,>=latex, node distance={5mm},sub/.style = {draw, fill, circle, inner sep = 1pt}, main/.style = {draw, fill, circle, inner sep = 1pt}, sub/.style = {draw = red, fill = red, circle, inner sep = 1pt}]
    \node[main,label = left:$1$] (1) at (3,0) {};
    \node[main,label = above:$2$] (2) at (5.5,0) {};
    \draw[->] (2) to  (1);
    \draw[->] (1) to  [out = 55, in = 135, looseness = 40] node[auto] {} (1);
    \draw[->] (1) to  [out = 220, in = 300, looseness = 40] node[auto] {} (1);
    \draw[->] (2) to  [out = 35, in = 115, looseness = 40] node[auto] {} (2);
    \draw[->] (2) to  [out = 35, in = 315, looseness = 40] node[auto] {} (2);
    \draw[->] (2) to  [out = 315, in = 235, looseness = 40] node[auto] {} (2);
    \end{tikzpicture}\ \ \begin{tikzpicture}[scale = 0.65, shorten <=2pt,shorten >=2pt,>=latex, node distance={5mm},sub/.style = {draw, fill, circle, inner sep = 1pt}, main/.style = {draw, fill, circle, inner sep = 1pt}, sub/.style = {draw = red, fill = red, circle, inner sep = 1pt}]
    \node[main,label = left:$1$] (1) at (3,0) {};
    \node[main,label = above:$2$] (2) at (5.5,0) {};
    \draw[->] (2) to  (1);
    \draw[->] (2) to  [out = 45, in = 125, looseness = 40] node[auto] {} (2);
    \draw[->] (2) to  [out = 320, in = 240, looseness = 40] node[auto] {} (2);
    \draw[->] (1) to  [out = 55, in = 135, looseness = 40] node[auto] {} (1);
    \draw[->] (1) to  [out = 145, in = 225, looseness = 40] node[auto] {} (1);
    \draw[->] (1) to  [out = 225, in = 305, looseness = 40] node[auto] {} (1);
    \end{tikzpicture}
    \caption{Directed graphs $F_1$ and $F_2$ in Example \ref{villancico}}
    \label{otroejemplo2}
    \end{center}
\end{figure}
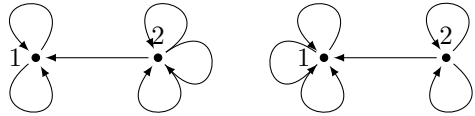

Again by formula \ref{countpaths}, according to the calculations made with {\it Mathematica} we have that $\displaystyle \frac{\log(\dim(V_{1000}/V_{999}))}{1000}=1.1061$ and $\h(KF_1)=\h(KF_2)=\log(3)=1.09861$. We observe again the same phenomenom. By the way, notice that it seems to be independent of the orientation of the edges.
\end{example}

\begin{example} \label{mortadela}
\rm Now suppose the graph $G$ is the one given in Figure \ref{alegria}.

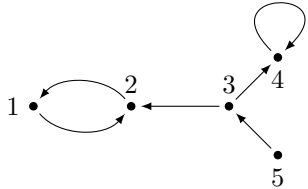
\begin{figure}[h]
    \begin{center}
         \begin{tikzpicture}[scale = 0.65, shorten <=2pt,shorten >=2pt,>=latex, node distance={5mm},sub/.style = {draw, fill, circle, inner sep = 1pt}, main/.style = {draw, fill, circle, inner sep = 1pt}, sub/.style = {draw = red, fill = red, circle, inner sep = 1pt}]
    \node[main,label = left:$1$] (1) at (3,0) {};
    \node[main,label = above:$2$] (2) at (5,0) {};
    \node[main,label = above:$3$] (3) at (7,0) {};
    \node[main,label = below: $4$] (4) at (8,1) {};
    \node[main,label = below: $5$] (5) at (8,-1) {};
    \draw[->] (1) to [bend right = 50] (2);
    \draw[->] (2) to [bend right = 50] (1);
    \draw[->] (3) to (2);
    \draw[->] (3) to (4);
    \draw[->] (5) to (3);
    \draw[->] (4) to  [out = 135, in = 45, looseness = 40] node[auto] {} (4);
    \end{tikzpicture}
    \caption{Directed graph $G$ in Example \ref{mortadela}}
    \label{alegria}
    \end{center}
\end{figure}

In this case, for $k=1000$ we obtain  $\displaystyle \frac{\log(\dim(V_{k}/V_{k-1}))}{k}=0.00352636$ and $\h(KG)=0$ having convergence once more.
\end{example}

\begin{example}\label{tatuaje}
\rm Let $D$ be the following graph:

\begin{figure}[H]
    \begin{center}
         \begin{tikzpicture}[scale = 0.65, shorten <=2pt,shorten >=2pt,>=latex, node distance={5mm},sub/.style = {draw, fill, circle, inner sep = 1pt}, main/.style = {draw, fill, circle, inner sep = 1pt}, sub/.style = {draw = red, fill = red, circle, inner sep = 1pt}]
    \node[main,label = above:$1$] (1) at (0,0) {};
    \node[main,label = above:$2$] (2) at (90: \Rad) {};
    \node[main,label = left:$3$] (3) at (210: \Rad) {};
    \node[main,label = right:$4$] (4) at (333: \Rad) {};
     
    \draw[->] (1) to [bend right = 50] (2);
    \draw[->] (2) to [bend right = 50] (1);
    \draw[->] (1) to [bend right = 50] (3);
    \draw[->] (3) to [bend right = 50] (1);
    \draw[->] (4) to [bend right = 50] (1);
    \draw[->] (1) to [bend right = 50] (4);
    \draw[->] (2) to [bend right = 60] (3);
    \draw[->] (3) to [bend right = 60] (4);
    \draw[->] (4) to [bend right = 60] (2);
    \end{tikzpicture}
    \caption{Directed graph $D$ in Example \ref{tatuaje}}
    \end{center}
   
    \end{figure}
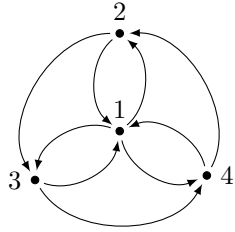

For $k=1000$, {\it Mathematica} gives us $\displaystyle \frac{\log(\dim(V_{k}/V_{k-1}))}{k}=0.842187$ and $\h(KD)=\log\left (\frac{1+\sqrt{13}}{2} \right )\approx 0.834115$.
\end{example}

\begin{example} \label{morita_example} \rm Consider the same graph as in Figure \ref{fig_grafo_fibonacci}, that is:
\begin{figure*}[ht]
    \begin{center}
         \begin{tikzpicture}[scale = 0.65, shorten <=2pt,shorten >=2pt,>=latex, node distance={5mm},sub/.style = {draw, fill, circle, inner sep = 1pt}, main/.style = {draw, fill, circle, inner sep = 1pt}, sub/.style = {draw = red, fill = red, circle, inner sep = 1pt}]
    \node[main,label = left:$u_1$] (1) at (3,0) {};
    \node[main,label = left:$u_2$] (2) at (5,0) {};
     
    \draw[->] (1) to [bend right = 50] (2);
    \draw[->] (2) to [bend right = 50] (1);
    \draw[->] (2) to [out = 315, in = 45, looseness = 40] (2);
    \end{tikzpicture}
    \end{center}
    \end{figure*}

Computing $\h(L_K(E))$ with {\it Mathematica} we have numerical convergence to $\log(e_+)$ where $e_+=\frac{1+\sqrt{5}}{2}$. At this point we notice that, despite the fact that $L_K(E)$ is morita equivalent to $L_K(R_2)$ (see \cite{Koc}), $\h(L_K(E))$ is (numerically) different from $\h(L_K(R_2)) = \log(2)$ (Example \ref{cena_navidad}). Observe that this does not contradict Theorem \ref{theorem_entropy_morita} since both $L_K(E)$ and $L_K(R_2)$ are considered with the standard filtrations.
\end{example}

{\noindent \bf Conclusion.} In all the above computations we obtain that $h_{alg}(L_K(E))\approx h_{alg}(KE)$. In a forthcoming work we are going to take a deeper look into this fact.

\section*{Acknowledgements}{The second, third, fourth  and fifth authors are supported by the Spanish Ministerio de Ciencia e Innovaci\'on   through project  PID2019-104236GB-I00/AEI/10.13039/\- 501100011033 and by the Junta de Andaluc\'{i}a  through projects  FQM-336 and UMA18-FEDERJA-119,  all of them with FEDER funds. The fifth author is supported by a Junta de Andalucía PID fellowship no. PREDOC\_00029. The computations were performed in the Picasso Supercomputer at the University
of Málaga, a node of the Spanish Supercomputing Network. The sixth author is supported by the Philippine 
Department of Science and Technology under the Accelerated Science and Technology Human Resource Development Program. The sixth author gratefully acknowledges the hospitality during her research stay at University of Málaga funded by Departament of Algebra, Geometry and Topology of the University of Málaga. } 

\bibliographystyle{plain}
\bibliography{ref}
\end{document}